\definecolor{teal}{rgb}{0.0, 0.5, 0.5}
\newcounter{mnotecount}[section]
\newcommand{\rmnote}[1]{}
\DeclareFontFamily{U}{mathb}{\hyphenchar\font45}
\DeclareFontShape{U}{mathb}{m}{n}{
      <5> <6> <7> <8> <9> <10> gen * mathb
      <10.95> mathb10 <12> <14.4> <17.28> <20.74> <24.88> mathb12
      }{}
\DeclareSymbolFont{mathb}{U}{mathb}{m}{n}
\let\dot\relax
\DeclareMathAccent{\dot}{0}{mathb}{"39}
\let\ddot\relax
\DeclareMathAccent{\ddot}{0}{mathb}{"3A}
\let\dddot\relax
\DeclareMathAccent{\dddot}{0}{mathb}{"3B}
\let\ddddot\relax
\DeclareMathAccent{\ddddot}{0}{mathb}{"3C}
\theoremstyle{plain}
\newtheorem*{theorem*}{Theorem}
\newtheorem{theorem}{Theorem}[section]
\newtheorem*{lemma*}{Lemma}
\newtheorem{lemma}[theorem]{Lemma}
\newtheorem*{assumption*}{Assumption}
\newtheorem*{proposition*}{Proposition}
\newtheorem{proposition}[theorem]{Proposition}
\newtheorem*{corollary*}{Corollary}
\newtheorem{corollary}[theorem]{Corollary}
\newtheorem*{claim*}{Claim}
\newtheorem*{conjecture*}{Conjecture}
\newtheorem*{question*}{Question}
\theoremstyle{definition}
\newtheorem*{definition*}{Definition}
\newtheorem*{example*}{Example}
\newtheorem*{algorithm*}{Algorithm}
\newtheorem*{remark*}{Remark}
\newtheorem*{remarks*}{Remarks}
\newtheorem{remark}[theorem]{Remark}
\newtheorem*{convention*}{Convention}
\theoremstyle{plain}
\Crefname{l}{Lemma}{Lemmas}    
\Crefname{p}{Proposition}{Propositions}
\Crefname{t}{Theorem}{Theorems}
\Crefname{c}{Corollary}{Corollaries}
\Crefname{r}{Remark}{Remarks}
\Crefname{d}{Definition}{Definitions}
\Crefname{e}{Example}{Examples}
\def\al{\alpha}
\def\be{\beta}
\def\de{\delta}
\def\ep{\epsilon}
\def\rh{\rho}
\def\si{\sigma}
\def\ta{\tau}
\def\vh{\varphi}
\def\om{\omega}
\def\Om{\Omega}
\def\N{\mathbb{N}}
\def\R{\mathbb{R}}
\def\cB{\mathcal{B}}
\def\cD{\mathcal{D}}
\def\cE{\mathcal{E}}
\def\cQ{\mathcal{Q}}
\def\cS{\mathcal{S}}
\def\fM{\mathfrak{M}}
\def\p{\partial}
\def\<{\langle}
\def\>{\rangle}
\def\ol{\overline}
\let\on=\operatorname
\newcommand{\sr}[1]%
{\ifmmode{}^\dagger\else${}^\dagger$\fi\ifvmode
\vbox to 0pt{\vss
 \hbox to 0pt{\hskip\hsize\hskip1em
 \vbox{\hsize3cm\raggedright\pretolerance10000
 \noindent #1\hfill}\hss}\vss}\else
 \vadjust{\vbox to0pt{\vss%
 \hbox to 0pt{\hskip\hsize\hskip1em%
 \vbox{\hsize3cm\raggedright\pretolerance10000%
 \noindent #1\hfill}\hss}\vss}}\fi%
}
\def\A{\;\forall}
\def\E{\;\exists}
\providecommand{\mapsfrom}{\kern.2em%
\setbox0=\hbox{$\leftarrow$\kern-.10em\rule[0.26mm]{0.1mm}{1.3mm}}\box0%
\kern.3em}
\title[Interpolation of derivatives and ultradifferentiable regularity]
{Interpolation of derivatives and ultradifferentiable regularity}
\author[A.~Rainer]{Armin Rainer}
\author[G.~Schindl]{Gerhard Schindl}
\address{Fakult\"at f\"ur Mathematik, Universit\"at Wien,
Oskar-Morgenstern-Platz~1, A-1090 Wien, Austria}
\email{armin.rainer@univie.ac.at}
\email{gerhard.schindl@univie.ac.at}
\begin{document}

\begin{abstract}
    Interpolation inequalities for $C^m$ functions allow to bound derivatives of intermediate order $0 < j<m$ by 
    bounds for the derivatives of order $0$ and $m$. 
    We review various interpolation inequalities for $L^p$-norms ($1 \le p \le \infty$) in arbitrary finite dimensions.
    They allow us to study ultradifferentiable regularity by lacunary estimates in a comprehensive way, 
    striving for minimal assumptions on the weights.
\end{abstract}

\thanks{This research was funded in whole or in part by the Austrian Science Fund (FWF) DOI 10.55776/P32905 and DOI 10.55776/P33417.
For open access purposes, the authors have applied a CC BY public copyright license to any author-accepted manuscript version arising from this submission.}
\keywords{Interpolation inequalities, Denjoy--Carleman classes, ultradifferentiable regularity, lacunary estimates}
\subjclass[2020]{
	26D10,      
    26E05,      
    26E10,  	
    41A17,  	
    46E10}      

\maketitle

\section{Introduction}

Ultradifferentiable classes are classes of $C^\infty$ functions defined by prescribed growth behavior of the 
infinite sequence of derivatives. The most classical among them are the Denjoy--Carleman classes, where the 
growth of the derivatives is dominated by a weight sequence $M=(M_j)_{j \ge 0}$. The Gevrey classes (in particular, 
the real analytic class),
which play an important role in the theory of differential equations,
are special cases thereof.

In applications, sometimes the question arises whether the ultradifferentiable regularity can be concluded 
if only lacunary information on the growth of the derivatives is available. 
For instance, Bolley, Camus, and M\'etivier \cite{Bolley:1991ab} studied the analyticity of analytic vectors 
and Liess \cite{Liess:1990aa} partially extended their work to Denjoy--Carleman classes.
Knowing suitable bounds for the sequence $(P^jf)_{j\ge0}$, where $P$ is an elliptic linear partial differential operator, 
one would like to conclude similar bounds for all derivatives $f$.
In \cite{Rainer:2019ab}, this approach was applied to isotropic functions and used to prove Chevalley-type results. 

The basic problem is the following.
Suppose that we know that a smooth function satisfies certain ultradifferentiable bounds  
for the derivatives of order $k_j$, where $(k_j)$ is a strictly increasing sequence of integers,
can we deduce that its derivatives of all orders satisfy the ultradifferentiable bounds? 
An affirmative answer clearly depends on conditions for the base sequence $(k_j)$, the weight $M=(M_j)$, 
and on suitable interpolation inequalities.

In the recent paper \cite{Albano:2023aa}, Albano and Mughetti, 
motivated by $L^2$ methods for proving local regularity of solutions for (degenerate) elliptic equations 
(cf.\ \cite{Bove:2017aa,Bove:2019aa,Bove:2020aa}),
gave sufficient conditions for Denjoy--Carleman classes of Roumieu type 
on a compact interval of $\R$, based on the Cartan--Gorny inequality (cf.\ \Cref{prop:CartanGorny}).
The base sequence $(k_j)$ is required to be such that $k_{j+1}/k_j$ is bounded. 
For the weight sequence, the authors assume a rather strong condition (which we recall and discuss in \Cref{rem:Albano}).
In fact, we prove a version under a weaker and more natural condition in \Cref{thm:CG} (see also \Cref{thm:DC}), namely, 
boundedness of $m_{k_{j+1}}/m_{k_j}$, where $m_j:=M_j^{1/j}$.
Note that Liess showed (in \cite{Liess:1990aa}) that, under certain assumptions on $M$, boundedness of $m_{k_{j+1}}/m_{k_j}$ is also a necessary condition 
for the interpolation problem (see \Cref{rem:Liess}).
We discuss at the end of \Cref{sec:compare} necessity of the conditions and how the different conditions are related.

The main purpose of this paper is 
to treat the interpolation problem in a broad and comprehensive way, striving for minimal assumptions on the weights. 
To this end, we review various interpolation inequalities for $L^p$-norms, where $1 \le p \le \infty$,
and work in arbitrary (finite) dimensions. Note that the growth of the involved constants is crucial in the ultradifferentiable setting.
We consider ultradifferentiable classes of local and global type 
and allow very general weight systems (so that also Braun--Meise--Taylor classes are covered).
All the ingredients seem to be well-known, even though somewhat scattered in the literature, 
but we think a unified treatment can be useful.

Let us outline the structure of the paper.
In \Cref{sec:interpol}, we recall several interpolation inequalities with short proofs for the convenience of the reader.
Building on these inequalities, we obtain three technical propositions in \Cref{sec:lacest} on which most of the subsequent 
ultradifferentiable regularity results by lacunary estimates are based.
In \Cref{sec:DC}, we present these results for a broad variety of Denjoy--Carleman classes (defined in terms of a single weight sequence)
and, in \Cref{sec:other}, we extend them to more general ultradifferentiable classes (defined by families of weight sequences)
including the Braun--Meise--Taylor classes.
In the last \Cref{sec:compare}, we compare our results with the approach of Albano and Mughetti \cite{Albano:2023aa}
and discuss optimality of the conditions imposed on the base sequence $(k_j)$ and on the weights.
In \Cref{sec:appendix}, we generalize to general weight systems a construction of \cite{Albano:2023aa} which shows that, in general, boundedness of $k_{j+1}/k_j$ 
cannot be omitted.

\section{Interpolation inequalities} \label{sec:interpol}

\subsection{The global setting}

The Landau--Kolmogorov inequality states that a $C^m$ function $f : \R \to \R$ with finite $\|f\|_{L^\infty(\R)}$ and $\|f^{(m)}\|_{L^\infty(\R)}$ 
satisfies 
\[
    \|f^{(j)}\|_{L^\infty(\R)} \le K_{m,j}\, \|f\|_{L^\infty(\R)}^{1-j/m} \|f^{(m)}\|_{L^\infty(\R)}^{j/m}, \quad j=1,\ldots,m-1.
\]
Due to Kolmogorov \cite{MR0001787}, 
the optimal constants $K_{m,j}$ are given by 
\[
    K_{m,j} = \frac{k_{m-j}}{k_m^{1-j/m}}, 
\]
where $k_r := \frac{4}{\pi} \sum_{i=0}^\infty [\frac{(-1)^i}{2i+1}]^{r+1}$ are the Favard constants.
Note that $1 \le k_r\le 2$ so that $K_{m,j} \le 2$.
By a simple functional-analytic argument, Certain and Kurtz \cite{Certain:1977aa} inferred that, if $(E,\|\cdot\|)$ is a real Banach space 
and $A$ is the generator of a strongly continuous group of isometries, then 
\begin{equation*}
   \|A^j x\|\le K_{m,j}\, \|x\|^{1-j/m} \|A^m x\|^{j/m}, \quad j=1,\ldots,m-1, 
\end{equation*}
for $x$ in the domain of $A^m$. 
In particular (see also \cite[Section 4.4]{Kwong:1992}), we have the following lemma.

\begin{lemma} \label[l]{lem:minterpol}
    Let $1 \le p \le \infty$ and $m \in \N_{\ge 2}$.
Let $f : \R^n \to \R$ be a $C^m$ function.  Then, for 
   all $v \in \mathbb S^{n-1}$,
   \begin{equation*}
    \|d_v^jf\|_{L^p(\R^n)} \le 2\, \|f\|_{L^p(\R^n)}^{1-j/m} \|d_v^mf\|_{L^p(\R^n)}^{j/m}, \quad j=1,\ldots,m-1,
   \end{equation*}
   if the right-hand side is finite,
   where $d_v^jf(x) := \p_t^j f(x+tv)|_{t=0}$.
\end{lemma}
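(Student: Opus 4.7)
The plan is to reduce the $n$-dimensional statement to the one-dimensional Landau--Kolmogorov inequality already invoked at the top of the subsection. Once coordinates are rotated so that $v = e_1$, the derivative $d_v^j f = \partial_1^j f$ is tangent to a single axis, and the estimate should follow from an essentially one-dimensional argument. I would split according to whether $p$ is finite, since the two cases rest on different incarnations of the one-dimensional inequality.

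For $1 \le p < \infty$, the direct route is to feed the translation group
\[
(T_t f)(x) := f(x + tv), \quad t \in \R,
\]
into the Certain--Kurtz estimate recalled just above the lemma: on $E = L^p(\R^n)$ this is a strongly continuous one-parameter group of isometries, and its infinitesimal generator is precisely the directional derivative $A = d_v$. A routine approximation argument (mollification along $v$) shows that a $C^m$ function $f$ with $f, d_v^m f \in L^p(\R^n)$ lies in the domain of $A^m$, so Certain--Kurtz delivers the inequality with constant $K_{m,j}$, and the bound $K_{m,j} \le 2$ from $1 \le k_r \le 2$ on the Favard constants matches the statement exactly.

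For $p = \infty$, the translation group fails to be strongly continuous on $L^\infty(\R^n)$, so Certain--Kurtz is not directly available. Instead, after rotating to $v = e_1$, I would apply the classical one-dimensional Kolmogorov inequality to each slice $g_{x'}(t) := f(t, x')$, for arbitrary fixed $x' \in \R^{n-1}$, using the trivial bounds $\|g_{x'}\|_{L^\infty(\R)} \le \|f\|_{L^\infty(\R^n)}$ and $\|g_{x'}^{(m)}\|_{L^\infty(\R)} \le \|d_v^m f\|_{L^\infty(\R^n)}$. Taking the supremum over $(t, x')$ of the resulting pointwise estimate on $\partial_1^j f(t, x')$ gives the claim with the same constant $2$.

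The expected main (and quite minor) obstacle is the domain identification in the $L^p$ case: one has to check that finiteness of $\|f\|_{L^p}$ and $\|d_v^m f\|_{L^p}$ really places $f$ in the domain of $A^m$, which amounts to verifying that the relevant iterated difference quotients converge strongly in $L^p$. This is standard and independent of the interpolation structure we are trying to establish.
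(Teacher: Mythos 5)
Your proposal is correct and follows essentially the same route as the paper: rotate so that $v=e_1$, handle $p=\infty$ by applying the one-dimensional Kolmogorov inequality on each line parallel to $e_1$ and taking suprema, and handle $p<\infty$ via the Certain--Kurtz semigroup inequality applied to the translation group on $L^p(\R^n)$ with generator $\p_1$. The only difference is that you flag the domain-identification step for $A^m$ explicitly, which the paper leaves implicit.
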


\begin{proof}
    We may  assume $v=(1,0,\ldots,0)$ by choosing a suitable orthonormal system of coordinates.
   Let us first consider $p=\infty$.
We have 
\begin{align*}
    \sup_{x \in \R^n} |\p_1^j f(x)| &\le \sup_{\substack{x_i \in \R\\ 2\le i \le n}} \Big[2\,  \Big(\sup_{x_1 \in \R}|f(x)|\Big)^{1-j/m} 
    \Big(\sup_{x_1 \in \R}|\p_1^mf(x)|\Big)^{j/m}\Big]
    \\
                                    &\le 2\,  \Big(\sup_{x \in \R^n}|f(x)|\Big)^{1-j/m} 
    \Big(\sup_{x \in \R^n}|\p_1^mf(x)|\Big)^{j/m}.
\end{align*}
For $p<\infty$, we apply the above remarks to $A = \p_1$ and $E= L^p(\R^n)$. 
It generates the group of translations $T(s)f(x_1,\ldots,x_n) = f(x_1+s,x_2,\ldots,x_n)$ which is strongly 
continuous, by the dominated convergence theorem.
\end{proof}

\begin{remark}
   For $p=2$, the factor $2$ can be omitted as follows from an application of the Fourier transform:
 \begin{align*}
    \int |\xi_1^j \widehat f(\xi)|^2 \, d\xi &=  
   \int (|\xi_1|^{m} |\widehat f(\xi)|)^{\frac{2j}{m}}|\widehat f(\xi)|^{2(1-\frac{j}{m})} \, d\xi   
                                             \le \|\xi_1^m \widehat f(\xi)\|^{2j/m}_{L^2(\R^n)}
                                             \|\widehat f(\xi)\|^{2(1-j/m)}_{L^2(\R^n)},
\end{align*}  
by H\"older's inequality. 
\end{remark}

\subsection{The local setting}

The next two lemmas
follow from an easy adaptation of the proof of \cite[Lemmas 2.3-2.5]{Bolley:1991ab}. 
We sketch the argument for the convenience of the reader.

\begin{lemma} \label[l]{lem:ainterpol}
    There is a constant $C>0$ such that the following holds.
    Let $1 \le p \le \infty$, $a>0$,
    $m \in \N$, $0 \le j \le m$, and $f \in C^m([-2a,2a])$. Then
    \begin{equation*}
        \frac{a^j}{j!}\|f^{(j)}\|_{L^p([-a,a])} \le C^m \Big(\frac{a^m}{m!}\|f^{(m)}\|_{L^p([-2a,2a])} + \|f\|_{L^p([-2a,2a])}\Big).
    \end{equation*}
\end{lemma}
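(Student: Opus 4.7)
The first move is a rescaling: setting $g(y) := f(ay)$ for $y\in[-2,2]$, one has $g^{(k)}(y) = a^k f^{(k)}(ay)$ and thus $\|g^{(k)}\|_{L^p([-r,r])} = a^{k-1/p}\|f^{(k)}\|_{L^p([-ar,ar])}$. Both sides of the target inequality then scale by the common factor $a^{-1/p}$, which cancels. So it suffices to prove the case $a=1$.

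The main technical tool is a one-step Taylor inequality on nested intervals: for $g\in C^2([-R,R])$, $0<r<R$, and $h:=R-r$, the identity
\[
g'(x)\;=\;\frac{g(x+h)-g(x)}{h}\;-\;\frac{1}{h}\int_0^h t\,g''(x+h-t)\,dt, \qquad x\in[-r,r],
\]
which comes from Taylor's formula with integral remainder after the substitution $s=x+h-t$, combined with Minkowski's inequality applied to the integral, yields
\[
\|g'\|_{L^p([-r,r])} \;\le\; \tfrac{2}{h}\,\|g\|_{L^p([-R,R])} \;+\; \tfrac{h}{2}\,\|g''\|_{L^p([-R,R])}.
\]

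Next, I iterate this on the nested intervals $I_k := [-(1+k/m),\,1+k/m]$ for $k=0,\ldots,m$, so that $I_0 = [-1,1]$, $I_m=[-2,2]$, and consecutive intervals are separated by a buffer of width $1/m$. Applied to $f^{(j-1)}$ it gives the two-sided recurrence
\[
\|f^{(j)}\|_{L^p(I_{k-1})} \;\le\; 2m\,\|f^{(j-1)}\|_{L^p(I_k)} \;+\; \tfrac{1}{2m}\,\|f^{(j+1)}\|_{L^p(I_k)}.
\]
A judicious weighted linear combination of these inequalities (equivalently, a double induction on $j$ and $m-j$) absorbs the interior derivatives $\|f^{(k)}\|$ with $0<k<m$, leaving only $\|f\|_{L^p(I_m)}$ and $\|f^{(m)}\|_{L^p(I_m)}$ on the right. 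The factorial weights $1/j!$ and $1/m!$ in the target emerge from grouping $m$ copies of the buffer $1/m$ with Stirling's estimate $(2m)^j/j! \le (2e)^m$, so that the compounded constant is of the form $C^m$.

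The main obstacle will be exactly this iteration bookkeeping: because each application of the one-step inequality branches in both directions (a lower- and a higher-order derivative appear on the right), a naive expansion produces a binary tree of terms. The real work is to design the weighted combination so that intermediate derivatives cancel or are absorbed while the compounded constants remain at most $C^m$ rather than, say, $m^m$; this relies on the precise numerical factors $2m$ and $1/(2m)$ in the one-step inequality combining multiplicatively with the factorials through Stirling's formula.
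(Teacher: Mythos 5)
Your opening moves are fine: the rescaling to $a=1$ is correct, and the one-step inequality
\[
\|g'\|_{L^p([-r,r])} \le \tfrac{2}{h}\,\|g\|_{L^p([-R,R])} + \tfrac{h}{2}\,\|g''\|_{L^p([-R,R])}, \qquad h=R-r,
\]
is correctly derived. But the proof stops exactly where the lemma begins: the ``judicious weighted linear combination'' that is supposed to absorb the intermediate derivatives is asserted, not exhibited, and the natural candidates fail. If you try a weighted maximum $Z_k:=\max_j w_j\|f^{(j)}\|_{L^p(I_k)}$ and ask the recurrence to be a contraction, you need $\frac{2}{h}\frac{w_j}{w_{j-1}}+\frac{h}{2}\frac{w_j}{w_{j+1}}\le\theta<1$ for all intermediate $j$; the first term forces $w_j/w_{j-1}<\theta h/2$ while the second forces $w_{j+1}/w_j>h/(2\theta)$, which are incompatible for $\theta<1$ (essentially AM--GM), so no choice of weights gives a contraction in both directions. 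If instead you expand the binary tree, every root-to-leaf path consumes one of the only $m$ available nested intervals $I_0\subset\cdots\subset I_m$, while paths that oscillate between intermediate orders need not reach order $0$ or $m$ within $m$ steps; truncating at depth $m$ leaves up to $2^m$ terms involving $\|f^{(i)}\|_{L^p([-2,2])}$ for $0<i<m$, which are not controlled by the two endpoint norms. So the central step is a genuine gap, not bookkeeping; closing it requires an additional idea (e.g.\ an Ehrling-type induction on $m$ using a same-interval version of the one-step inequality, with careful tracking to keep the constants at $C^m$ times the factorials --- and that constant tracking is precisely what the paper needs).

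For comparison, the paper avoids the two-sided recurrence altogether: it multiplies $f$ by a cutoff $\vh$ with $\|\vh^{(k)}\|_\infty\le(C_0m)^k$, writes $(\vh f)^{(j)}$ by Taylor's formula with integral remainder of order $m-j$ (legitimate because $\vh f$ vanishes to infinite order at the left endpoint), expands $(\vh f)^{(m)}$ by Leibniz, and integrates by parts to move all derivatives of order $<m$ off of $f$ and onto the kernel and the cutoff. In that scheme only $f$ and $f^{(m)}$ ever appear on the right-hand side, so no absorption is needed, and the combinatorial sums are estimated directly to give $C^m j!$. If you want to keep your route, you would need to either prove the same-interval first-order inequality and run the induction on $m$ with explicit constants, or concede the multiplicative (Cartan--Gorny type) inequality first and pass to the additive form by Young's inequality.
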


\begin{proof}
    By rescaling, it suffices to assume $a=1$. 
    Fix $m$. Let $\vh$ be a $C^\infty$ function with support in $(-2,2)$ that equals $1$ near $[-1,1]$ and 
    satisfies $\|\vh^{(k)}\|_{L^\infty(\R)} \le (C_0 m)^k$ for $0 \le k \le m$ for a universal constant $C_0$ (cf.\ \cite[Theorem 1.3.5]{Hoermander83I}).
    By Taylor's theorem, for $t \in [-2,2]$ and $0 \le j \le m-1$,
    \begin{align*}
        (\vh f)^{(j)}(t)  = \int_{-2}^t \frac{(t-s)^{m-j-1}}{(m-j-1)!} (\vh f)^{(m)}(s)\, ds
        = \sum_{i=0}^m \binom{m}{i} A_i(t),
    \end{align*}
    where
    \[
        A_i(t) := \frac{1}{(m-j-1)!} \int_{-2}^t (t-s)^{m-j-1} \vh^{(m-i)}(s) f^{(i)}(s) \, ds. 
    \]
    For $i=m$, we have
    \[
        |A_m(t)| \le \frac{1}{(m-j-1)!} \int_{-2}^2 |t-s|^{m-j-1} |\vh(s) f^{(m)}(s)| \, ds, 
    \]
    and thus, by Young's inequality,
    \begin{align*}
        \|A_m\|_{L^p([-1,1])} &\le  \|A_m\|_{L^p([-2,2])} 
        \\
                              &\le \frac{1}{(m-j-1)!} \|\mathbf 1_{[-2,2]}t^{m-j-1} \|_{L^1(\R)} \|\vh f^{(m)}\|_{L^p(\R)}
                              \\
                              &\le \frac{2^{m-j+1}}{(m-j)!}  \|f^{(m)}\|_{L^p([-2,2])}.
    \end{align*}
    For $0 \le i \le m-1$ and $t \in [-1,1]$, integration by parts gives
    \begin{align*}
        A_i(t) &= \frac{(-1)^i}{(m-j-1)!}  \int_{-2}^{-1}\p_s^i [(t-s)^{m-j-1} \vh^{(m-i)}(s)] f(s) \, ds
        \\
               &=  \sum_{\ell=0}^k \binom{i}{\ell}\frac{(-1)^{i+\ell}}{(m-j-1-\ell)!}  \int_{-2}^{-1} (t-s)^{m-j-1-\ell} \vh^{(m-\ell)}(s) f(s) \, ds,
    \end{align*}
    where $k:=  \min\{i,m-j-1\}$. Using Young's inequality as before, we get
    \begin{align*}
        \|A_i\|_{L^p([-1,1])} &\le\sum_{\ell=0}^k \binom{i}{\ell}\frac{2^{m-j-\ell+1}}{(m-j-\ell)!} (C_0 m)^{m-\ell} \|f\|_{L^p([-2,2])}.
    \end{align*}
    Consequently,
    \begin{align*}
        \sum_{i=0}^{m-1} \binom{m}{i} \|A_i\|_{L^p([-1,1])} &\le\sum_{i=0}^{m-1} \sum_{\ell=0}^k\binom{m}{i} \binom{i}{\ell}\frac{2^{m-j-\ell+1}}{(m-j-\ell)!} (C_0 m)^{m-\ell} \|f\|_{L^p([-2,2])}
    \end{align*}
    and the lemma follows if we prove that
    \[
        \sum_{i=0}^{m-1} \sum_{\ell=0}^k\binom{m}{i} \binom{i}{\ell}\frac{2^{m-j-\ell+1}}{(m-j-\ell)!} (C_0 m)^{m-\ell} \le C^m j!
    \]
    for a universal constant $C$. The left-hand side equals 
    \begin{align*}
        \sum_{\ell=0}^{m-j-1} \sum_{i=\ell}^{m-1} \frac{(m-\ell)!}{(m-i)!(i-\ell)!} \frac{m!}{\ell!(m-\ell)!}\frac{2^{m-j-\ell+1}}{(m-j-\ell)!} (C_0 m)^{m-\ell}
    \end{align*}
    which is bounded by
    \begin{align*}
        \MoveEqLeft \sum_{\ell=0}^{m-j-1} 2^{m-\ell} 2^m\frac{2^{m-j-\ell+1}}{(m-j-\ell)!} C_0^{m-\ell} e^m \frac{m!}{\ell!}
        \\
        &\le e^m 2^{2m} 2^{j+1} C_0^j\, j! \sum_{\ell=0}^{m-j} \frac{(m-j)!}{\ell! (m-j-\ell)!} (2^2 C_0)^{m-j-\ell}
    \end{align*}
    which is of the required form.
\end{proof}

\begin{lemma} \label[l]{lem:Uinterpol}
   There exist constants $C_0,C>0$, depending only on the dimension $n$, such that the following holds.
   Let $1 \le p \le \infty$.
   Let $U, V$ bounded open subsets of $\R^n$ such that $\ol U \subseteq V$.
   Let $f \in C^m(\ol V)$.
   For all $0<a \le C_0 \on{dist}(U,\p V)$, all $0 \le j \le m$, and all $v \in \mathbb S^{n-1}$,
    \begin{equation*}
        \frac{a^j}{j!}\|d^j_v f\|_{L^p(U)} \le C^m \Big(\frac{a^m}{m!}\|d^m_v f\|_{L^p(V)} + \|f\|_{L^p(V)}\Big).
    \end{equation*}
\end{lemma}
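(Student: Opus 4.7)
The natural plan is to reduce to the one-dimensional estimate of \Cref{lem:ainterpol} by restricting $f$ to lines parallel to $v$. First I would choose an orthonormal coordinate system so that $v = e_1$ and write $x = (x_1, x') \in \R \times \R^{n-1}$. For each fixed $x'$, put $U_{x'} := \{t \in \R : (t, x') \in U\}$ and $V_{x'} := \{t \in \R : (t, x') \in V\}$; the slice $g(t) := f(t, x')$ then lies in $C^m(V_{x'})$ with $d_v^k f(t, x') = g^{(k)}(t)$.

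Next I would cover $\R$ by translates of the configuration appearing in \Cref{lem:ainterpol}. Setting $c_k := (2k+1)a$ for $k \in \Z$, the short intervals $I_k := [c_k - a, c_k + a]$ tile $\R$ and the enlarged intervals $\tilde I_k := [c_k - 2a, c_k + 2a]$ have length $4a$ and overlap at most twofold. Since any point of $\tilde I_k$ lies within distance $3a$ of any point of $I_k$, by taking $C_0 := 1/3$ the hypothesis $a \le C_0 \on{dist}(U, \p V)$ forces $\tilde I_k \times \{x'\} \subset V$ whenever $I_k \cap U_{x'} \ne \emptyset$; indeed, $U$ lies at distance at least $\on{dist}(U, \p V)$ from $\R^n \setminus V$. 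Applying \Cref{lem:ainterpol} (after translation) to $g$ on each such $\tilde I_k$ then yields
\begin{equation*}
    \frac{a^j}{j!}\|g^{(j)}\|_{L^p(I_k)} \le C_1^m \Big(\frac{a^m}{m!}\|g^{(m)}\|_{L^p(\tilde I_k)} + \|g\|_{L^p(\tilde I_k)}\Big),
\end{equation*}
where $C_1$ is the constant from \Cref{lem:ainterpol}.

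To finish, for $p < \infty$ I would raise this to the $p$-th power, apply $(\alpha + \beta)^p \le 2^{p-1}(\alpha^p + \beta^p)$, sum over those $k$ with $I_k \cap U_{x'} \ne \emptyset$, and exploit the bounded overlap of the $\tilde I_k$ to reassemble integrals over $V_{x'}$; taking $p$-th roots gives the analogous estimate on each slice $U_{x'}$. A second $p$-th power, integration in $x' \in \R^{n-1}$, and Fubini then produce the claimed $L^p(U)$--$L^p(V)$ bound. The case $p = \infty$ is handled in the same way with suprema replacing sums. The main obstacle I anticipate is the bookkeeping of the multiplicative constants --- absorbing the factor $2^{(p-1)/p}$ and the overlap constant $2$ into the exponential $C^m$ --- but this is harmless since we may assume $C_1 \ge 1$, and no finer geometric ingredients are needed.
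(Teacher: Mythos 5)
Your proof is correct and follows essentially the same route as the paper's: reduce to \Cref{lem:ainterpol} on one-dimensional slices parallel to $v$ and reassemble via a congruent covering whose doubled pieces lie in $V$ and have bounded overlap, using the $p$-th power/sum trick (resp.\ suprema for $p=\infty$). The only cosmetic difference is that the paper covers $U$ by $n$-dimensional cubes $Q$ with $2Q\subseteq V$ and disjoint interiors and slices inside each cube, whereas you slice first and tile each one-dimensional slice by intervals; both are valid, and your choice of $C_0$ should merely be taken slightly below $1/3$ so that the enlarged intervals land strictly inside $V$.
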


\begin{proof}
   We may assume that $v=(1,0,\ldots,0)$.
    Let us first show the assertion for $U = (-a,a)^n$ and $V=(-2a,2a)^n$.
   For $x':= (x_2,\ldots,x_n) \in (-a,a)^{n-1}$, \Cref{lem:ainterpol} gives
   \begin{align*}
        \frac{a^j}{j!}\|\p_1^j f(\cdot,x')\|_{L^p((-a,a))} \le C^m \Big(\frac{a^m}{m!}\|\p_1^m f(\cdot,x')\|_{L^p((-2a,2a))} + \|f(\cdot,x')\|_{L^p((-2a,2a))}\Big).
   \end{align*}
   For $p=\infty$, the statement follows, by taking the supremum over all $x' \in (-a,a)^{n-1}$.
   For $p<\infty$, take the $p$-th power an integrate over $x' \in (-a,a)^{n-1}$.

   In general, 
    there is a constant $C_0>0$ depending only on $n$
    such that, if $a \le C_0 \on{dist}(U,\p V)$, 
    then
   $U$ can be covered by a family $\cQ$ of cubes $Q=x+[-a,a]^n$ such that $2Q=x + [-2a,2a]^n \subseteq V$ 
   and such that any two cubes in $\cQ$ have disjoint interior. Then, for each $Q \in \cQ$, 
   \begin{align*}
        \frac{a^j}{j!}\|\p_1^j f\|_{L^p(Q)} \le C^m \Big(\frac{a^m}{m!}\|\p_1^m f\|_{L^p(2Q)} + \|f\|_{L^p(2Q)}\Big).
   \end{align*}
   For $p=\infty$, take the maximum over all $Q\in \cQ$, for $p<\infty$, take the $p$-power and the sum over all $Q \in \cQ$.
\end{proof}

Now we combine \Cref{lem:Uinterpol} with \Cref{lem:minterpol} (following \cite[p.\ 193]{Liess:1990aa}).

\begin{corollary} \label[c]{cor:aminterpol}
    Let $1 \le p \le \infty$.
    Let $U, V$ bounded open subsets of $\R^n$ such that $\ol U \subseteq V$.
    Let $f \in C^\infty(\ol V)$.
    Then, for each integer $m \ge 2$, $j= 1,\ldots,m-1$, and all $v \in \mathbb S^{n-1}$,
    \[
        \|d_v^j f\|_{L^p(U)} \le C^m \|f\|^{1-j/m}_{L^p(V)} \Big(\|d_v^m f\|^{j/m}_{L^p(V)} +  m^j \|f\|^{j/m}_{L^p(V)} \Big),
    \]
    where $C>0$ is a constant depending only on $U$, $V$, and $n$.
\end{corollary}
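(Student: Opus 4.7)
The plan is to apply \Cref{lem:Uinterpol} and then optimize over the free scale parameter $a$, splitting into two regimes according to whether the balancing value of $a$ lies inside the admissible range $(0,a_0]$, where $a_0 := C_0\, \on{dist}(U,\p V)$ is the threshold from \Cref{lem:Uinterpol}. Writing $\alpha := \|f\|_{L^p(V)}$ and $\beta := \|d_v^m f\|_{L^p(V)}$, \Cref{lem:Uinterpol} supplies, for each $a \in (0,a_0]$, the family of bounds
\[
    \|d_v^j f\|_{L^p(U)} \le C^m \Big(\frac{j!\, a^{m-j}}{m!}\, \beta + \frac{j!}{a^j}\, \alpha\Big),
\]
with $C$ the constant from \Cref{lem:Uinterpol}. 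The two terms on the right balance at the formal value $a^* := (m!\,\alpha/\beta)^{1/m}$, at which they coincide with $\frac{j!}{(m!)^{j/m}}\, \alpha^{1-j/m} \beta^{j/m}$. Stirling's estimate $(m!)^{j/m} \ge (m/e)^j$, together with $j! \le j^j$ and $j \le m$, yields $j!/(m!)^{j/m} \le e^m$, so this balanced value is already of the form permitted by the first summand on the right-hand side of the corollary.

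If $a^* \le a_0$, I choose $a := a^*$ and the resulting bound is absorbed into the first summand of the asserted inequality, with the constant enlarged by a factor $2e^m$. If instead $a^* > a_0$, then $\frac{a_0^m}{m!}\,\beta < \alpha$, so the choice $a := a_0$ makes the second term in the upper bound dominate and
\[
    \|d_v^j f\|_{L^p(U)} \le 2 C^m\, \frac{j!}{a_0^j}\, \alpha.
\]
Using $j! \le m^j$ together with the crude bound $1/a_0^j \le \max(1,1/a_0)^m$, this takes the form $\tilde C^m\, m^j\, \alpha = \tilde C^m\, m^j\, \alpha^{1-j/m}\alpha^{j/m}$, with $\tilde C$ now depending only on $U$, $V$, and $n$; this is absorbed into the second summand of the asserted inequality. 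The degenerate cases $\alpha = 0$ (forcing $f \equiv 0$ on $V$, hence $d_v^j f \equiv 0$ on $U$) and $\beta = 0$ (which falls into the regime $a^* = +\infty > a_0$) are handled within the same framework.

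The reasoning is essentially a one-parameter optimization, and the only genuinely technical point is the bookkeeping of constants: one has to verify that the Stirling factor $j!/(m!)^{j/m}$, the combinatorial factor $j!/a_0^j$, and the exponential constant $C^m$ from \Cref{lem:Uinterpol} combine, via $j \le m$, into a single constant to the power $m$. The case split itself is forced on us by the cutoff $a \le a_0$ in \Cref{lem:Uinterpol}, which is what ultimately produces the additive $m^j \alpha^{j/m}$ correction on the right-hand side of the corollary.
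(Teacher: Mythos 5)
Your argument is correct, but it is a genuinely different route from the paper's. The paper proves \Cref{cor:aminterpol} by multiplying $f$ with a cutoff $\vh$ whose derivatives up to order $m$ grow like $(C_0m)^{|\al|}$, applying the global multiplicative inequality of \Cref{lem:minterpol} to $\vh f$, and then invoking \Cref{lem:Uinterpol} (at the single fixed scale $a = C_1\de$) only to control $\|d_v^m(\vh f)\|_{L^p}$ via the Leibniz rule; the additive term $m^j\|f\|^{j/m}$ arises there from the $m!\,\|f\|$ contribution after taking the $j/m$-th power. You instead bypass \Cref{lem:minterpol} and the cutoff entirely, and extract the multiplicative estimate from the additive one in \Cref{lem:Uinterpol} by optimizing over the free scale $a$, with the case split at the admissibility threshold $a_0 = C_0\on{dist}(U,\p V)$. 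Your bookkeeping is sound: the balanced value $\frac{j!}{(m!)^{j/m}}\,\al^{1-j/m}\be^{j/m}$ is controlled by $e^m\al^{1-j/m}\be^{j/m}$ via $(m!)^{1/m}\ge m/e$ and $j!\le m^j$, and in the regime $a^*>a_0$ the domination of the $\al$-term and the bounds $j!\le m^j$, $a_0^{-j}\le\max(1,a_0^{-1})^m$ land you exactly in the second summand. What each approach buys: yours is more elementary and self-contained (no H\"ormander-type cutoff with controlled high-order derivatives, no semigroup-based inequality in the local setting), and it makes transparent that the correction term $m^j\|f\|^{j/m}_{L^p(V)}$ is precisely the price of the cutoff $a\le a_0$ in the scale optimization; the paper's version keeps the Landau--Kolmogorov inequality as the uniform mechanism in both the global and local settings, in line with Liess's original argument. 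Both yield a constant of the form $C^m$ depending only on $U$, $V$, and $n$, which is all that is used later.
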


\begin{proof}
Let $W$ be the open $\de$-neighborhood of $\ol U$, where $\de := \frac{1}{2} \on{dist}(U,\p V)$. 
Let $\vh$ be a $C^\infty$ function with support in $W$ that equals $1$ near $\ol U$ and 
satisfies $\|\p^\al \vh\|_{L^\infty(\R^n)} \le C_0^{|\al|+1} m^{|\al|}$ for $|\al| \le m$, 
where $C_0$ only depends on $U$, $V$, and $n$ (cf.\ \cite[Theorem 1.4.2]{Hoermander83I}). 
    Then $\vh f$ has a $C^\infty$ extension by zero outside $W$ to all of $\R^n$ which we also denote by $\vh f$. 
    By \Cref{lem:minterpol},
\[
    \|d_v^jf\|_{L^p(U)} \le \|d^j_v (\vh f)\|_{L^p(\R^n)}  \le 2\, \|(\vh f)\|^{1-j/m}_{L^p(W)} \|d_v^m(\vh f)\|_{L^p(W)}^{j/m}
\]
and, by \Cref{lem:Uinterpol} with $a=C_1\de$, where $C_1=C_1(n)$,
\begin{align*}
    \|& d_v^m(\vh f)\|_{L^p(W)} 
    \le \sum_{i=0}^m \binom{m}{i} \|d_v^{m-i}\vh\|_{L^\infty(\R^n)} \|d_v^if\|_{L^p(W)}
\\
                               &\le \sum_{i=0}^m \binom{m}{i}  C_0(C_0m)^{m-i} C^m\frac{i!}{(C_1\de)^i} \Big(\frac{(C_1\de)^m}{m!} \|d_v^m f\|_{L^p(V)} 
                               + \|f\|_{L^p(V)}\Big)
                               \\
                               &\le  \sum_{i=0}^m \binom{m}{i} \frac{C_0(e C_0 C \max\{C_1\de,1\})^m}{(C_0C_1\de)^i}\big( \|d_v^mf\|_{L^p(V)}
                                   + m!\, \|f\|_{L^p(V)}\big)
                               \\
                               &\le C_2^m \Big(  \|d_v^mf\|_{L^p(V)} +  m^m \|f\|_{L^p(V)}\Big).
\end{align*}
This implies the assertion.
\end{proof}

\subsection{The Cartan--Gorny inequality}

The following result is due to Gorny \cite{Gorny39} and independently to Cartan \cite{Cartan40}.

\begin{proposition} \label[p]{prop:CartanGorny}
    Let $I \subseteq \R$ be a compact interval and $f \in C^m(I)$.
  Then, for $j=1,\ldots,m-1$, 
  \[
      \|f^{(j)}\|_{L^\infty(I)} \le 4 e^{2j} \Big(\frac{m}{j}\Big)^j \, \|f\|_{L^\infty(I)}^{1- j/m} \max\Big\{\|f^{(m)}\|_{L^\infty(I)}, 
      \frac{m!}{|I|^m} \|f\|_{L^\infty(I)}  \Big\}^{j/m}.
  \]
\end{proposition}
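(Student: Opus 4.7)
The plan is to follow the classical polynomial-approximation strategy underlying the Gorny and Cartan proofs.

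First I would reduce to $I=[-1,1]$. Let $c$ denote the midpoint of $I$ and set $g := f \circ \phi$, where $\phi(t) = c + (|I|/2)t$ maps $[-1,1]$ bijectively onto $I$. Then $\|g^{(k)}\|_{L^\infty([-1,1])} = (|I|/2)^k \|f^{(k)}\|_{L^\infty(I)}$ for every $k\ge 0$, and multiplying both sides of the desired inequality by $(|I|/2)^j$ while absorbing the scale factor $(|I|/2)^j = ((|I|/2)^m)^{j/m}$ inside the maximum transforms the statement into the scale-invariant form
\[
    \|g^{(j)}\|_\infty \le 4\,e^{2j}\,(m/j)^j\,\|g\|_\infty^{1-j/m}\,\max\Big\{\|g^{(m)}\|_\infty,\; \tfrac{m!}{2^m}\|g\|_\infty\Big\}^{j/m}
\]
on $[-1,1]$, since $\|g\|_\infty = \|f\|_\infty$ and $(|I|/2)^m \cdot m!/|I|^m = m!/2^m$.

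Next I would approximate $g$ on $[-1,1]$ by its Chebyshev interpolant $P$ of degree at most $m-1$ at the nodes $x_k = \cos\tfrac{(2k-1)\pi}{2m}$, $k = 1,\dots,m$. Two standard tools are then available. First, the interpolation remainder estimate
\[
    \|g - P\|_\infty \le \frac{\|g^{(m)}\|_\infty}{2^{m-1}\,m!},
\]
together with analogous (combinatorially larger) estimates for $\|g^{(j)} - P^{(j)}\|_\infty$. Second, the Markov--Bernstein inequality, coming from the extremality of the Chebyshev polynomial $T_{m-1}$ among polynomials bounded by $1$ on $[-1,1]$, gives
\[
    \|P^{(j)}\|_\infty \le T_{m-1}^{(j)}(1)\,\|P\|_\infty = \|P\|_\infty \prod_{k=0}^{j-1}\frac{(m-1)^2 - k^2}{2k+1}.
\]

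The conclusion then splits into two regimes matched to the two arguments of the maximum. When $\|g^{(m)}\|_\infty \ge \tfrac{m!}{2^m}\|g\|_\infty$, the remainder is small compared to $\|g\|_\infty$ and the combination of the two estimates produces a Landau--Kolmogorov-type bound; in the opposite regime, $g$ is effectively a polynomial of degree $m-1$ up to the Chebyshev error, and the Markov bound applied to $\|P\|_\infty \lesssim \|g\|_\infty$ suffices. The main obstacle is extracting the sharp prefactor $4\,e^{2j}(m/j)^j$: a direct combination of the Markov inequality with Taylor's remainder yields only the weaker constant of shape $(m^2/j)^j$, and reducing the exponent of $m$ from $2j$ to $j$ requires the Stirling asymptotic $(2j-1)!! \sim (2j/e)^j$ applied to the product in $T_{m-1}^{(j)}(1)$, together with a careful simultaneous balancing of $\|P\|_\infty$ and $\|g-P\|_\infty$ in both regimes. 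The factor $e^{2j}$ and the constant $4$ emerge from this optimization once the two bounds are tuned against each other.
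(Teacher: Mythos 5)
The paper does not actually prove this proposition: it is stated as a classical result with references to Gorny and Cartan, so there is no in-paper argument to compare against, and your sketch has to be judged on its own. The reduction to $I=[-1,1]$ is correct, and the toolkit you assemble (Chebyshev interpolation, the V.~A.~Markov inequality $\|P^{(j)}\|_\infty\le T_{m-1}^{(j)}(1)\,\|P\|_\infty$, and a Rolle-type bound for the derivatives of the interpolation error) is the right one. But there is a genuine gap, and it is not merely the tuning of constants that you flag at the end.

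The gap is this: interpolating once, at $m$ nodes spread over the whole interval $[-1,1]$, cannot produce the multiplicative bound $\|g\|_\infty^{1-j/m}\|g^{(m)}\|_\infty^{j/m}$ in the regime where $\|g^{(m)}\|_\infty$ dominates. The best bound for the remainder $h=g-P$ at that scale is $\|h^{(j)}\|_\infty\le \tfrac{2^{m-j}}{(m-j)!}\|g^{(m)}\|_\infty$ (Rolle gives $m-j$ zeros of $h^{(j)}$, then the divided-difference representation of $h^{(j)}$ applies); this is \emph{linear} in $\|g^{(m)}\|_\infty$, whereas the target grows only like $\|g^{(m)}\|_\infty^{j/m}$, so for $\|g^{(m)}\|_\infty$ large with $\|g\|_\infty$ fixed your two-regime split cannot close. (You also have the regimes swapped: $\|g-P\|_\infty\le \|g^{(m)}\|_\infty/(2^{m-1}m!)$ is guaranteed small compared to $\|g\|_\infty$ precisely when $\|g^{(m)}\|_\infty\le\tfrac{m!}{2^m}\|g\|_\infty$, not in the opposite case.) The missing idea, which is the heart of Gorny's argument, is to interpolate on a window $J\ni x$ of \emph{adaptively chosen} length $\de:=\big(m!\,\|f\|_\infty/\la\big)^{1/m}$, where $\la:=\max\{\|f^{(m)}\|_\infty,\tfrac{m!}{|I|^m}\|f\|_\infty\}$; the second argument of the max exists exactly to guarantee $\de\le|I|$ so the window fits. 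On $J$ one then gets $\|f-P\|_{L^\infty(J)}\le 2^{1-2m}\|f\|_\infty$, hence $\|P\|_{L^\infty(J)}\le 2\|f\|_\infty$; the rescaled Markov inequality contributes $(2/\de)^j\,T_{m-1}^{(j)}(1)\cdot 2\|f\|_\infty$; the remainder derivative contributes $\tfrac{\de^{m-j}}{(m-j)!}\|f^{(m)}\|_\infty\le m^j\de^{-j}\|f\|_\infty$; and the factor $\de^{-j}=(\la/(m!\|f\|_\infty))^{j/m}\le (e/m)^j\,\|f\|_\infty^{-j/m}\la^{j/m}$ is what converts both terms into the interpolated form $\|f\|_\infty^{1-j/m}\la^{j/m}$ with the stated prefactor, using $T_{m-1}^{(j)}(1)\le m^{2j}/(2j-1)!!\le (em^2/(2j))^j$. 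Without this choice of $\de$ the argument does not go through, so you should restructure the proof around the window length rather than around a case distinction on the max.
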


The factor $(\frac{m}{j})^j$ is bounded by $e^m$.

\section{Ultradifferentiable regularity by lacunary estimates} \label{sec:lacest}

In this section, we prove three technical propositions 
on which most of the subsequent results are based.

Let $0=: k_0 < k_1 < k_2 < \cdots $ be a strictly increasing infinite sequence of positive integers.
A sequence $(k_j)$ with these properties is called a \emph{base sequence}.
If additionally $k_{j+1}/k_j$, for $j\ge1$, is bounded we say that $(k_j)$ is a \emph{special base sequence}.

\begin{proposition} \label[p]{prop:keyglobal1}
    Let $(k_j)$ be a base sequence.
    Let $(m_j)$ and $(m'_j)$ be positive increasing sequences of reals satisfying $m_j \le m'_j$ for all $j$.
    Assume that $m_{k_{j+1}}/m'_{k_j}$ is bounded. Let $1 \le p \le \infty$
    and $f \in C^\infty(\R^n)$.

    If there are $C,\rh>0$ such that
    \begin{equation} \label{eq:DC1global1}
   \max_{|\al|=k_j}  \|\p^{\al}f\|_{L^p(\R^n)} \le C (\rh m_{k_j})^{k_j}, \quad j \ge 0,
\end{equation}
then there exist $C',\rh'>0$ such that
\begin{equation} \label{eq:DC2global1}
    \max_{|\al|=\ell}\|\p^\al f\|_{L^p(\R^n)} \le C' (\rh' m'_{\ell})^{\ell}, \quad \ell \ge 0.
\end{equation}
We have $\rh' =  O(\rh)$ as $\rh \to 0$.
\end{proposition}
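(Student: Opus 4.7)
The strategy is to replace the full-derivative hypothesis by uniform $L^p$ control of the directional derivatives $d_v^\ell f$ over $v \in \mathbb S^{n-1}$, interpolate using \Cref{lem:minterpol} between the known orders $k_j$ and $k_{j+1}$, and invert back via polarization; the hypotheses on the two weight sequences enter only in the final bookkeeping.

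First, expanding $d_v^{k_j}f = \sum_{|\alpha|=k_j}\binom{k_j}{\alpha}v^\alpha\partial^\alpha f$ and bounding $\sum_{|\alpha|=k_j}\binom{k_j}{\alpha}|v|^\alpha = \|v\|_1^{k_j} \le n^{k_j/2}$ for $v \in \mathbb S^{n-1}$ turns \eqref{eq:DC1global1} into
\[
    \|d_v^{k_j}f\|_{L^p(\R^n)} \le C\,(n^{1/2}\rho\, m_{k_j})^{k_j}, \qquad j \ge 0.
\]
For arbitrary $\ell$, choose $j$ with $k_j \le \ell \le k_{j+1}$; the extreme cases are immediate, so assume $k_j < \ell < k_{j+1}$ (in particular $k_{j+1}-k_j \ge 2$). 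Applying \Cref{lem:minterpol} to $d_v^{k_j}f$ with order $m := k_{j+1}-k_j$ and intermediate index $\ell-k_j$ yields
\[
    \|d_v^\ell f\|_{L^p(\R^n)} \le 2\,\|d_v^{k_j}f\|_{L^p(\R^n)}^{1-\theta}\,\|d_v^{k_{j+1}}f\|_{L^p(\R^n)}^{\theta},\qquad \theta := \frac{\ell-k_j}{k_{j+1}-k_j}.
\]
Combining the two displays and using $k_j(1-\theta)+k_{j+1}\theta = \ell$ factors out $\rho^\ell$ and leaves the mixed weight factor $m_{k_j}^{k_j(1-\theta)}\,m_{k_{j+1}}^{k_{j+1}\theta}$.

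To consolidate this into $(m'_\ell)^\ell$, set $A := \sup_j m_{k_{j+1}}/m'_{k_j}$. Monotonicity of $m'$ together with $k_j \le \ell$ yields $m_{k_j} \le m'_\ell$, and the hypothesis gives $m_{k_{j+1}} \le A m'_{k_j} \le A m'_\ell$, so the mixed factor is bounded by $A^{k_{j+1}\theta}(m'_\ell)^\ell$. The algebraic point driving the proof is that $k_{j+1}\theta \le \ell$; this is equivalent to $\ell \le k_{j+1}$, so $A^{k_{j+1}\theta} \le A^\ell$. Combined with polarization of the symmetric $\ell$-linear form $D^\ell f$, which gives $\max_{|\alpha|=\ell}\|\partial^\alpha f\|_{L^p(\R^n)} \le B^\ell \sup_{v \in \mathbb S^{n-1}}\|d_v^\ell f\|_{L^p(\R^n)}$ for a constant $B=B(n)$, this produces \eqref{eq:DC2global1} with $\rho' := BAn^{1/2}\rho$, which is $O(\rho)$ as $\rho \to 0$.

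I expect the main obstacle to be the compatibility between the interpolation exponent $\theta$ and the weight hypothesis: the inequality $k_{j+1}\theta \le \ell$ is what allows the result to hold without any growth restriction on $k_{j+1}/k_j$, and everything else is standard manipulation of the directional Landau--Kolmogorov inequality and the polarization identity.
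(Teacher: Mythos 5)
Your proposal is correct and follows essentially the same route as the paper's proof: reduce \eqref{eq:DC1global1} to a uniform bound on $\|d_v^{k_j}f\|_{L^p(\R^n)}$ over $v\in\mathbb S^{n-1}$, interpolate between orders $k_j$ and $k_{j+1}$ via \Cref{lem:minterpol} applied to $d_v^{k_j}f$, absorb the weights using $m_{k_j}\le m'_\ell$, $m_{k_{j+1}}\le A\,m'_\ell$ and $k_j(1-\theta)+k_{j+1}\theta=\ell$, and recover $\partial^\al f$ by polarization. The only differences are expository (you spell out the multinomial expansion and quote the polarization bound as a black box, where the paper writes formula \eqref{eq:polar} explicitly), plus the harmless caveat that the constant $A$ should be replaced by $\max\{A,1\}$ before invoking $A^{k_{j+1}\theta}\le A^\ell$.
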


\begin{proof}
    
    By \eqref{eq:DC1global1},  there are constants $C_1,\rh_1>0$ such that 
    \begin{equation} \label{eq:dirder}
        \|d_v^{k_j}f\|_{L^p(\R^n)} \le C_1 (\rh_1 m_{k_j})^{k_j}, \quad j \ge 0,\, v \in \mathbb S^{n-1}.
\end{equation}
Let $\ell \ge0$ and let $j\ge 0$ be such that $k_j \le \ell < k_{j+1}$.
By \Cref{lem:minterpol} and the assumptions on $(k_j)$, $(m_j)$, and $(m'_j)$, 
\begin{align}
    \label{eq:comput}
    \|d_v^\ell f\|_{L^p(\R^n)} &\le 2\, \|d_v^{k_j}f\|_{L^p(\R^n)}^{1-\frac{\ell -k_j}{k_{j+1}-k_j}} \|d_v^{k_{j+1}}f\|_{L^p(\R^n)}^{\frac{\ell -k_j}{k_{j+1}-k_j}}
                               \\ \nonumber
                               &\le 2C_1\, (\rh_1 m_{k_j})^{k_j\big(1-\frac{\ell -k_j}{k_{j+1}-k_j}\big)}(\rh_1 m_{k_{j+1}})^{k_{j+1}\frac{\ell -k_j}{k_{j+1}-k_j}}
                               \\ \nonumber
                               &\le 2C_1\, (\rh_1 m'_{k_j})^{k_j\big(1-\frac{\ell -k_j}{k_{j+1}-k_j}\big)}(C_2\rh_1 m'_{k_{j}})^{k_{j+1}\frac{\ell -k_j}{k_{j+1}-k_j}}
                               \\ \nonumber
                               &\le 2C_1\, (C_2\rh_1 m'_\ell)^\ell. 
\end{align}

By polarization \cite[Lemma 7.13]{KM97}, for $|\al|=\ell$ and $v_1,\ldots, v_\ell$ the 
list of standard unit vectors 
\[
    \underbrace{e_1,\ldots,e_1}_{\al_1}, \underbrace{e_2, \ldots,e_2}_{\al_2}, \ldots,\underbrace{e_n,\ldots,e_n}_{\al_n},
\]
we have 
\begin{align} \label{eq:polar}
      \p^\al f 
                            &=  \frac{1}{\ell!} \sum_{\ep_1,\ldots,\ep_\ell=0}^1 (-1)^{\ell-\sum \ep_\ell}\Big(\sum \ep_i\Big)^\ell 
                            d^\ell_{\sum \ep_i v_i/\sum \ep_i}f.   
  \end{align}
  Therefore,
\begin{align*}
    \| \p^\al f \|_{L^p(\R^n)} \le  
                            2 C_1 (C_2 \rh_1 m'_\ell)^\ell  \frac{1}{\ell!} \sum_{k=0}^\ell \binom{\ell}{k}k^\ell \le 2C_1 (2 eC_2 \rh_1 m'_\ell)^\ell.
\end{align*}
  Thus \eqref{eq:DC2global1} is proved.
\end{proof}

\begin{proposition} \label[p]{prop:key}
    Let $(k_j)$ be a special base sequence.
    Let $(m_j)$ and $(m'_j)$ be positive increasing sequences of reals satisfying $m_j \le m'_j$ for all $j$
    and such that $j/m_j$ is bounded.
    Assume that $m_{k_{j+1}}/m'_{k_j}$ is bounded. Let $1 \le p \le \infty$.
    Let $U, V$ bounded open subsets of $\R^n$ such that $\ol U \subseteq V$.
    Let $f \in C^\infty(\ol V)$.

    If there are $C,\rh>0$ such that
    \begin{equation} \label{eq:DC1}
   \max_{|\al|=k_j}  \|\p^{\al}f\|_{L^p(V)} \le C (\rh m_{k_j})^{k_j}, \quad j \ge 0,
\end{equation}
then there exist $C',\rh'>0$ such that
\begin{equation} \label{eq:DC2}
    \max_{|\al|=\ell}\|\p^\al f\|_{L^p(U)} \le C' (\rh' m'_{\ell})^{\ell}, \quad \ell \ge 0.
\end{equation}
Under the additional assumption that $j/m_j \to 0$, we have that $\rh' =  O(\rh)$ as $\rh \to 0$. 
\end{proposition}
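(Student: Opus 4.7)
The plan is to mirror the proof of \Cref{prop:keyglobal1}, replacing the global interpolation \Cref{lem:minterpol} by its local counterpart \Cref{cor:aminterpol}. As a first step I would polarize exactly as in the derivation of \eqref{eq:dirder}, reducing the hypothesis \eqref{eq:DC1} to
\[
    \|d_v^{k_j}f\|_{L^p(V)} \le C_1(\rh_1 m_{k_j})^{k_j}, \quad j\ge 0,\; v\in\mathbb S^{n-1},
\]
with $C_1=C$ and $\rh_1=n\rh$. Given $\ell\ge 1$, let $j$ be the index with $k_j\le\ell<k_{j+1}$ (if $k_{j+1}=k_j+1$ there is no intermediate $\ell$ to bound and the hypothesis already suffices, so assume $k_{j+1}-k_j\ge 2$). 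The key step is then to apply \Cref{cor:aminterpol} to $g:=d_v^{k_j}f\in C^\infty(\ol V)$ on the pair $U\subset V$, with the integers $m$ and $j$ of that corollary replaced by $k_{j+1}-k_j$ and $\ell-k_j$; writing $\theta:=(\ell-k_j)/(k_{j+1}-k_j)$, this produces
\[
    \|d_v^\ell f\|_{L^p(U)} \le C^{k_{j+1}-k_j}\|d_v^{k_j}f\|_{L^p(V)}^{1-\theta}\Bigl(\|d_v^{k_{j+1}}f\|_{L^p(V)}^\theta + (k_{j+1}-k_j)^{\ell-k_j}\|d_v^{k_j}f\|_{L^p(V)}^\theta\Bigr).
\]

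The first summand inside the parentheses I would handle exactly as in the computation \eqref{eq:comput}: using $k_j(1-\theta)+k_{j+1}\theta=\ell$, boundedness of $m_{k_{j+1}}/m'_{k_j}$, the monotonicity $m_j\le m'_j$, and $k_{j+1}\le C_3\ell$ (combining boundedness of $k_{j+1}/k_j$ with $k_j\le\ell$), one gets a contribution of the form $C_1(K_1\rh_1 m'_\ell)^\ell$ with $K_1$ independent of $\rh$. The main obstacle is the second summand, $C^{k_{j+1}-k_j}(k_{j+1}-k_j)^{\ell-k_j}C_1(\rh_1 m_{k_j})^{k_j}$, which has no analogue in the global inequality. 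To bound it I would invoke boundedness of $j/m_j$: writing $\ell\le D m'_\ell$ yields $(k_{j+1}-k_j)^{\ell-k_j}\le(C_3 D m'_\ell)^{\ell-k_j}$, and together with $m_{k_j}^{k_j}\le(m'_\ell)^{k_j}$ and $\rh_1^{k_j}\le\max(\rh_1,1)^\ell$ one reaches a bound of the form $C_1(K_2\, m'_\ell)^\ell$ with $K_2$ uniformly bounded in $\rh$. Summing the two contributions and polarizing back via \eqref{eq:polar} then recovers \eqref{eq:DC2}, with $\rh'>0$ that is bounded but a priori does not vanish with $\rh$.

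For the refinement $\rh'=O(\rh)$ under $j/m_j\to 0$, the point I would exploit is that $k_{j+1}/m'_\ell\le C_3 k_j/m_{k_j}\to 0$ as $j\to\infty$; hence for each small $\rh>0$ there exists $L=L(\rh)$ such that $k_{j+1}\le\rh\, m'_\ell$ whenever $\ell\ge L$. For such $\ell$ one has $(k_{j+1}-k_j)^{\ell-k_j}m_{k_j}^{k_j}\le\rh^{\ell-k_j}(m'_\ell)^\ell$, and combined with $\rh_1^{k_j}=(n\rh)^{k_j}\le n^\ell\rh^{k_j}$, the second summand becomes $\le C_1(K_3\rh\,m'_\ell)^\ell$. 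The finitely many orders $\ell<L$ contribute only quantities bounded in terms of $\|f\|_{C^L(\ol V)}$, which can be folded into $C'$ (allowed to depend on $\rh$ and $f$). This yields $\rh'=O(\rh)$ as $\rh\to 0$, as claimed.
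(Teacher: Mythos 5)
Your proposal is correct and follows essentially the same route as the paper's proof: reduce to directional derivatives, apply \Cref{cor:aminterpol} to $d_v^{k_j}f$ with $m=k_{j+1}-k_j$, absorb the extra additive term $(k_{j+1}-k_j)^{\ell-k_j}\|d_v^{k_j}f\|$ via boundedness (resp.\ decay) of $j/m_j$, and polarize back. The only cosmetic deviation is in the refinement for $\ell<L(\rh)$, where you fold $\|f\|_{C^L(\ol V)}$ into $C'$ while the paper instead multiplies the already-established bound $C_1(C_6 m'_\ell)^\ell$ by $\max\{\rh_1^{-\ell}:\ell<L\}$, keeping $C'$ independent of $f$ beyond the hypothesis constant $C$; both yield the stated conclusion.
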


\begin{proof}
    By \eqref{eq:DC1},  there are constants $C_1,\rh_1>0$ such that 
    \[ 
        \|d_v^{k_j}f\|_{L^p(V)} \le C_1 (\rh_1 m_{k_j})^{k_j}, \quad j \ge0,\, v \in \mathbb S^{n-1}.
\]
Let $\ell \ge 0$ and let $j\ge 0$ be such that $k_j \le \ell < k_{j+1}$.
By \Cref{cor:aminterpol} and the assumptions on $(k_j)$, $(m_j)$, and $(m'_j)$, 
\begin{align*}
    &\|d_v^\ell f\|_{L^p(U)}
    \\
    &\le 
C^{k_{j+1}-k_j} \Big( \|d_v^{k_j} f\|^{1-\frac{\ell -k_j}{k_{j+1}-k_j}}_{L^p(V)} \|d_v^{k_{j+1}}f\|^{\frac{\ell -k_j}{k_{j+1}-k_j}}_{L^p(V)}
    + (k_{j+1}-k_j)^{\ell - k_j}\|d_v^{k_j}f\|_{L^p(V)} \Big) 
    \\
    &\le 
    C_1 C_2^{k_j} \Big(  (\rh_1 m_{k_j})^{k_j\big(1-\frac{\ell -k_j}{k_{j+1}-k_j}\big)} (\rh_1 m_{k_{j+1}})^{k_{j+1}\frac{\ell -k_j}{k_{j+1}-k_j}}
    + (C_3 k_j)^{\ell - k_j} (\rh_1 m_{k_j})^{k_j}\Big)
    \\
    &\le 
    C_1 C_2^{k_j} \Big(  (\rh_1 m'_{k_j})^{k_j\big(1-\frac{\ell -k_j}{k_{j+1}-k_j}\big)} (C_4\rh_1 m'_{k_{j}})^{k_{j+1}\frac{\ell -k_j}{k_{j+1}-k_j}}
    + (C_5 m'_{k_j})^{\ell - k_j} (\rh_1 m'_{k_j})^{k_j}\Big)
    \\
    &\le 
    C_1 C_2^{k_j} \Big(  (C_4\rh_1 m'_{k_{j}})^{\ell}
    + ((C_5+\rh_1) m'_{k_j})^{\ell} \Big)
    \\
    &\le C_1 (C_6 m'_{\ell})^\ell. 
\end{align*}

If we assume that $j/m_j \to 0$ (actually $k_j/m_{k_j}\to 0$ is enough), 
then for each $\ta>0$ there is $j_\ta$ such that $k_j/m_{k_j} \le \ta$ for all $j\ge j_\ta$.
Thus $C_5 = C_3 \rh_1$ provided that $j\ge j_{\rh_1}$ and so 
\[
    \|d_v^\ell f\|_{L^p(U)} \le C_1(C_7 \rh_1 m'_{\ell})^\ell, \quad \ell \ge k_{j_{\rh_1}}.
\]
For $\ell<k_{j_{\rh_1}}$, 
\[
    \|d_v^\ell f\|_{L^p(U)} \le C_1(C_6 m'_{\ell})^\ell \le C_1\max\{\rh_1^{-\ell} : \ell <k_{j_{\rh_1}} \}\cdot (C_6\rh_1 m'_{\ell})^\ell.
\]
Since $\rh_1 = O(\rh)$, we get 
\[
    \|d_v^\ell f\|_{L^p(U)} \le C'(\rh' m'_{\ell})^\ell, \quad \ell \ge 0,
\]
where $\rh' = O(\rh)$ and $C'=C'(\rh')$.

To end the proof, it suffices to apply $\|\cdot\|_{L^p(U)}$ to \eqref{eq:polar} and use the estimate for $\|d_v^\ell f\|_{L^p(U)}$.
\end{proof}

\begin{remark}
    The requirement $k_0=0$ for the base sequence is important in \Cref{prop:keyglobal1}. 
    Without this assumption there is no reason why a function satisfying \eqref{eq:DC1global1} should also 
    fulfill \eqref{eq:DC2global1} for $\ell=0$.
    In the local setting of \Cref{prop:key},
    the assumption $k_0=0$ can be made without loss of generality, by adjusting the constant $C$.

    Note that, in the proof of \Cref{prop:key}, the assumption that $k_{j+1}/k_j$ is bounded is necessary to estimate 
    the factor $C^{k_{j+1}-k_j}$. For the term $(k_{j+1}-k_j)^{\ell - k_j}$ it is enough to use that $j/m_j$ and  $m_{k_{j+1}}/m'_{k_j}$ are bounded.
\end{remark}

\begin{proposition} \label[p]{prop:keyglobal2}
    Let $(k_j)$ be a base sequence.
    Let $(m_j)$ and $(m'_j)$ be positive increasing sequences of reals satisfying $m_j \le m'_j$ for all $j$.
    Assume that $m_{k_{j+1}}/m'_{k_j}$ is bounded. 
    Let $f \in C^\infty(\R^n)$.

    If there are $C,\si>0$ such that
    \begin{align} 
        \label{eq:Sass}
        \max_{|\al|=k_j}  \|x^{\al}f\|_{L^\infty(\R^n)} &\le C (\si m_{k_j})^{k_j}, \quad j \ge 0,
\end{align}
then there exist $C',\si'>0$ such that
\begin{align} 
    \label{eq:Sconcl}
    \max_{|\al|=\ell}\|x^\al f\|_{L^\infty(\R^n)} \le C' (\si' m'_{\ell})^{\ell}, \quad \ell \ge 0.
\end{align}
We have $\si' =  O(\si)$.
\end{proposition}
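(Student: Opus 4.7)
The plan is to mirror the proof of \Cref{prop:keyglobal1} with the partial derivatives $\p_i$ replaced throughout by the multiplication operators $x_i$, and with the role of \Cref{lem:minterpol} played by a trivial level-set $L^\infty$-interpolation for multiplication operators. First, I would reduce the multi-index hypothesis \eqref{eq:Sass} to a directional one: for $v\in\mathbb S^{n-1}$, the multinomial expansion $\langle v,x\rangle^{k_j}=\sum_{|\al|=k_j}\frac{k_j!}{\al!}v^\al x^\al$ together with $\sum_{|\al|=k_j}\frac{k_j!}{\al!}=n^{k_j}$ and $|v^\al|\le 1$ gives
\[
\|\langle v,x\rangle^{k_j} f\|_{L^\infty(\R^n)} \le C\,(n\si\, m_{k_j})^{k_j},\quad j\ge 0,\ v\in\mathbb S^{n-1}.
\]

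The substitute for \Cref{lem:minterpol} comes from a simple level-set split: for $k_j\le \ell\le k_{j+1}$ and any $R>0$, on $\{|\langle v,x\rangle|\le R\}$ one has $|\langle v,x\rangle^\ell f|\le R^{\ell-k_j}|\langle v,x\rangle^{k_j}f|$, while on the complement $|\langle v,x\rangle^\ell f|\le R^{\ell-k_{j+1}}|\langle v,x\rangle^{k_{j+1}}f|$. Optimizing $R$ so the two bounds coincide yields the multiplicative interpolation
\[
\|\langle v,x\rangle^\ell f\|_{L^\infty(\R^n)} \le \|\langle v,x\rangle^{k_j} f\|_{L^\infty(\R^n)}^{1-\th}\,\|\langle v,x\rangle^{k_{j+1}} f\|_{L^\infty(\R^n)}^{\th},
\]
with $\th=(\ell-k_j)/(k_{j+1}-k_j)$. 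Substituting the estimate from the first step and running the chain of inequalities \eqref{eq:comput} verbatim, using the boundedness of $m_{k_{j+1}}/m'_{k_j}$ and the monotonicity of $(m'_j)$, produces $\|\langle v,x\rangle^\ell f\|_{L^\infty(\R^n)}\le C(C_1\si\, m'_\ell)^\ell$ uniformly in $v\in\mathbb S^{n-1}$, with $C_1$ depending only on the ratio bound.

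To return to multi-index estimates, I would apply the polarization identity \eqref{eq:polar} with the partial derivatives replaced by multiplication by coordinates; since the derivation of \eqref{eq:polar} uses only that $\p_1,\dots,\p_n$ pairwise commute and that $d_v$ depends linearly on $v$, the same argument with the operators ``multiplication by $x_1,\dots,x_n$'' in place of $\p_1,\dots,\p_n$ gives, for $|\al|=\ell$ and $v_1,\dots,v_\ell$ the standard-basis list associated with $\al$,
\[
x^\al \;=\; \frac{1}{\ell!}\sum_{\ep_1,\dots,\ep_\ell=0}^1 (-1)^{\ell-\sum\ep_i}\Big\langle \sum_i\ep_i v_i,\,x\Big\rangle^\ell.
\]
Multiplying by $f$, taking $L^\infty$-norms, using $|\sum\ep_i v_i|\le \sum\ep_i$ together with $\frac{1}{\ell!}\sum_{k=0}^\ell\binom{\ell}{k}k^\ell\le (2e)^\ell$, exactly as at the end of the proof of \Cref{prop:keyglobal1}, one obtains $\|x^\al f\|_{L^\infty(\R^n)}\le C\,(2eC_1 n\si\, m'_\ell)^\ell$, which is \eqref{eq:Sconcl} with $\si'=O(\si)$. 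I anticipate no real obstacle: the entire argument is a direct transcription of the proof of \Cref{prop:keyglobal1}, and is in fact slightly simpler because the $L^\infty$-interpolation for multiplication operators is an elementary level-set argument rather than the Kolmogorov--Landau inequality.
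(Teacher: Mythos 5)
Your argument is correct, but it takes a genuinely different route from the paper's proof of \Cref{prop:keyglobal2}. You scalarize via the directional powers $\langle v,x\rangle^{k}$, which forces you to do work on both ends: a multinomial expansion to pass from $\max_{|\al|=k_j}\|x^\al f\|_{L^\infty}$ to $\sup_{v\in\mathbb S^{n-1}}\|\langle v,x\rangle^{k_j}f\|_{L^\infty}$, and the polarization identity \eqref{eq:polar} (transposed to multiplication operators, which is legitimate since $(w_1,\ldots,w_\ell)\mapsto\langle w_1,x\rangle\cdots\langle w_\ell,x\rangle$ is symmetric multilinear) to come back. The paper instead scalarizes via the radial quantity $|x|^{k}$: the return trip is then free, since $|x^\al|\le|x|^{|\al|}$ pointwise, and the forward direction is a short $\ell^p$-norm comparison giving $|x|^k\le n^{(3k-2)/2}\max_{|\al|=k}|x^\al|$; no polarization is needed. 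Your level-set optimization over $R$ is also more elaborate than necessary: since $\ell=(1-\th)k_j+\th k_{j+1}$, one has the pointwise identity $|g|^\ell|f|=(|g|^{k_j}|f|)^{1-\th}(|g|^{k_{j+1}}|f|)^{\th}$ (for $g(x)=\langle v,x\rangle$ or $g(x)=|x|$), which yields the multiplicative $L^\infty$-interpolation in one line; the paper uses exactly this before invoking the computation \eqref{eq:comput}. Both routes give \eqref{eq:Sconcl} with $\si'=O(\si)$ and dimensional constants; what yours buys is a uniform template with the proof of \Cref{prop:keyglobal1}, at the cost of extra combinatorial bookkeeping that the radial choice avoids.
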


\begin{proof}
    Recall that 
    \[
    |x| \le n^{1/2-1/p} \, \Big(\sum_{i=1}^n |x_i|^p \Big)^{1/p} \quad \text{ if } p\ge 2.
    \]
    So, if $|\al|=k \ge 2$, then
    \begin{align*}
        |x^\al| \le |x|^k \le n^{(k-2)/2} \sum_{i=1}^n |x_i|^k \le n^{(k-2)/2} \sum_{|\al|=k} |x^\al| \le  n^{(3k-2)/2} \max_{|\al| = k} |x^\al|. 
    \end{align*}
    If $|\al|=1$, then
    \[
        |x^\al| \le |x| \le \sum_{i=1}^n |x_i| \le n \max_{|\al| = 1} |x^\al|. 
    \]
    The assumption \eqref{eq:Sass} gives
    \begin{equation*}
        |x|^{k_j} |f(x)| \le C (n^{3/2} \si \,m_{k_j})^{k_j}, \quad j \ge 0,\, x \in \R^n.
    \end{equation*}
   Then, for $k_j \le \ell < k_{j+1}$,
    \begin{align*}
        |x|^\ell |f(x)|  &= (|x|^{k_j}|f(x)|)^{1- \frac{\ell -k_j}{k_{j+1}-k_j}} (|x|^{k_{j+1}} |f(x)|)^{\frac{\ell -k_j}{k_{j+1}-k_j}}.
    \end{align*}
    So \eqref{eq:Sass} implies \eqref{eq:Sconcl}, by a computation analogous to \eqref{eq:comput}.
\end{proof}

\section{Denjoy--Carleman classes} \label{sec:DC}

Let $M=(M_k)_{k\ge 0}$ be a positive sequence of real numbers. 
We say that $M$ is a \emph{weight sequence} if $M_0=1\le M_1 \le \mu_2 \le \mu_3 \le \cdots$, where $\mu_k:= M_k/M_{k-1}$.
Then also the sequence $m_k:= M_k^{1/k}$ is increasing.

Let $U$ be an open subset of $\R^n$ and $1 \le p \le \infty$. The  \emph{$L^p$-based local Denjoy--Carleman class of Roumieu type} on $U$
is the set $\cE^{\{M\}}_{L^p}(U)$ of all $f \in C^\infty(U)$ such that for all open relatively compact subsets $\Om \Subset U$
there exist constants $C,\rh>0$ such that 
\begin{equation} \label{eq:DC3}
   \max_{|\al|=k}  \|\p^{\al}f\|_{L^p(\Om)} \le C (\rh m_{k})^{k}, \quad k \in \N.
\end{equation}
Analogously, the \emph{$L^p$-based local Denjoy--Carleman class of Beurling type} is the set
$\cE^{(M)}_{L^p}(U)$ of all $f \in C^\infty(U)$ that satisfy \eqref{eq:DC3} for all $\Om$ and all $\rh$ with $C=C(\Om,\rh)$.
We use the placeholder $[\cdot]$ for either the Roumieu case $\{\cdot\}$ or the Beurling case $(\cdot)$.  

Replacing \eqref{eq:DC3} with various global requirements we obtain natural \emph{global Denjoy--Carleman classes},
\begin{align*}
    \cB^{\{M\}}_{L^p}(\R^n)&:= \{f \in C^\infty(\R^n): \exists C,\rh>0 ~\forall \al:   \|\p^{\al}f\|_{L^p(\R^n)} \le C (\rh m_{|\al|})^{|\al|}\}, 
    \\
    \cB^{(M)}_{L^p}(\R^n)&:= \{f \in C^\infty(\R^n): \forall \rh>0~\exists C>0 ~\forall \al:  \|\p^{\al}f\|_{L^p(\R^n)} \le C (\rh m_{|\al|})^{|\al|}\}, 
\end{align*}
\emph{Gelfand--Shilov classes} $\cS^{\{M\}}(\R^n)$ (resp.\ $\cS^{(M)}(\R^n)$) consisting of all $f \in C^\infty(\R^n)$ 
such that there exist $\rh,\si>0$ such that (resp.\ for all $\rh,\si>0$)
\[
    \sup_{\al,\be \in \N^n} \sup_{x \in \R^n} \frac{|x^\al \p^\be f(x)|}{(\rh m_{|\al|})^{|\al|} (\si m_{|\be|})^{|\be|}} < \infty,
\]
and the classes $\cD^{[M]}(\R^n) := \cB^{[M]}_{L^\infty}(\R^n) \cap  C^\infty_c(\R^n)$ of all $\cB^{[M]}_{L^\infty}$-functions with compact support.  
(Cleary, $\cD^{[M]}(\R^n)$ is nontrivial only in the non-quasianalytic setting.)
Note that various aspects of these global classes have recently been studied in \cite{KrieglMichorRainer14a,KrieglMichorRainer16,NenningRainer17}.
We have continuous (with respect to the natural locally convex topologies) inclusions 
    \[
      \xymatrix{
          \cD^{\{M\}}(\R^n) \ar[r]  & \cS^{\{M\}}(\R^n) \ar[r]  & \cB^{\{M\}}_{L^p}(\R^n)
           \ar[r] & \cE^{\{M\}}_{L^p}(\R^n) \\
          \cD^{(M)}(\R^n) \ar[r] \ar[u] & \cS^{(M)}(\R^n) \ar[r] \ar[u] & \cB^{(M)}_{L^p}(\R^n)
          \ar[u] \ar[r] & \cE^{(M)}_{L^p}(\R^n) \ar[u]\\
      }
    \]
    and, for $1 \le p \le q \le \infty$,    
    \begin{equation} \label{eq:p=q}
        \xymatrix{
            \cB^{[M]}_{L^p}(\R^n) \ar[r] & \cB^{[M]}_{L^q}(\R^n)& & \cE^{[M]}_{L^p}(U) \ar@{=}[r] & \cE^{[M]}_{L^q}(U),
        }
    \end{equation}
    provided that $M$ is \emph{derivation closed}, i.e., 
 \begin{equation} \label{eq:dc}
        \sup_{k\ge 0}\Big(\frac{M_{k+1}}{M_k}\Big)^{1/(k+1)} < \infty. 
    \end{equation}
    This follows from the Sobolev inequality, since the latter condition guarantees stability under taking derivatives.
    For the local classes, we have equality if \eqref{eq:dc} holds, because $\|\cdot\|_{L^p(\Om)} \le |\Om|^{1/p-1/q} \|\cdot\|_{L^q(\Om)}$.

If $(k_j)$ is a base sequence, we consider
the sets $\cE^{[M]}_{L^p,(k_j)}(U)$, $\cB^{[M]}_{L^p,(k_j)}(\R^n)$, $\cD^{[M]}_{(k_j)}(\R^n)$
of $C^\infty$ functions that satisfy the defining estimates for multiindices $\al$ with $|\al|=k_j$.

\begin{theorem} \label[t]{thm:DC}
    Let $1 \le p \le \infty$.
    Let $(k_j)$ be a base sequence
    and $M=(M_j)$  a weight sequence such that
    $m_{k_{j+1}}/m_{k_j}$ is bounded.
    We have:
    \begin{enumerate}
        \item[(a)]  $\cB^{[M]}_{L^p,(k_j)}(\R^n)= \cB^{[M]}_{L^p}(\R^n)$.
        \item[(b)]  $\cD^{[M]}_{(k_j)}(\R^n)= \cD^{[M]}(\R^n)$.
    \end{enumerate}
    Let $(k_j)$ be a special base sequence and $U \subseteq \R^n$ open. Then:
    \begin{enumerate}
        \item[(c)] $\cE^{\{M\}}_{L^p,(k_j)}(U)= \cE^{\{M\}}_{L^p}(U)$ provided that $j/m_j$ is bounded. 
        \item[(d)] $\cE^{(M)}_{L^p,(k_j)}(U)= \cE^{(M)}_{L^p}(U)$ provided that $j/m_j \to 0$.
    \end{enumerate}
\end{theorem}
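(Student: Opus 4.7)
The plan is to derive all four parts by specializing the three propositions from \Cref{sec:lacest} to the case $m_j=m'_j$. In every part, the nontrivial inclusion is the one asserting that lacunary estimates imply the full class; the reverse inclusion is tautological. Note that the hypothesis $m_{k_{j+1}}/m_{k_j}$ bounded is exactly the assumption needed in the propositions when $m_j=m'_j$, and in (c)--(d) the base sequence being special provides the boundedness of $k_{j+1}/k_j$ required by \Cref{prop:key}.

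For (a), take $f \in \cB^{[M]}_{L^p,(k_j)}(\R^n)$. In the Roumieu case, the defining lacunary estimate is \eqref{eq:DC1global1} for some $C,\rh$, so \Cref{prop:keyglobal1} with $m'_j=m_j$ immediately yields \eqref{eq:DC2global1}, i.e.\ $f \in \cB^{\{M\}}_{L^p}(\R^n)$. In the Beurling case, we need to produce the full estimate for every prescribed $\rh'>0$; we exploit the statement $\rh'=O(\rh)$ as $\rh \to 0$. Namely, given any target $\rh'$, choose $\rh$ so small that the implied constant in $\rh'=O(\rh)$ gives conclusion parameter at most $\rh'$, apply the lacunary hypothesis for that $\rh$ to obtain $C$, and invoke \Cref{prop:keyglobal1}. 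Part (b) is immediate from (a) with $p=\infty$, because compactness of support is not used in the propositions and is preserved: a $C^\infty$ function with compact support lies in $\cD^{[M]}(\R^n)$ iff it lies in $\cB^{[M]}_{L^\infty}(\R^n)$, and similarly for the lacunary versions.

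For (c) and (d), fix $f \in \cE^{[M]}_{L^p,(k_j)}(U)$ and a relatively compact open set $\Om \Subset U$. Interpose an auxiliary open set $V$ with $\Om \Subset V \Subset U$. The lacunary hypothesis supplies constants so that \eqref{eq:DC1} holds on $V$ (in the Roumieu version) or for any prescribed $\rh$ on $V$ (in the Beurling version). Now apply \Cref{prop:key} with $U=\Om$ and this $V$. In (c), the assumption that $j/m_j$ is bounded is exactly what \Cref{prop:key} requires, so its conclusion gives \eqref{eq:DC3} for all $\ell$ on $\Om$, whence $f\in \cE^{\{M\}}_{L^p}(U)$. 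In (d), the assumption $j/m_j \to 0$ activates the final clause of \Cref{prop:key}, namely $\rh'=O(\rh)$; reasoning as in the Beurling case of (a), we produce the full estimate on $\Om$ with an arbitrarily small constant, so $f\in \cE^{(M)}_{L^p}(U)$.

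The only genuine subtlety lies in the Beurling cases, where one must carefully exploit the tracking statement $\rh'=O(\rh)$ to pass the universal quantifier over $\rh$ from the lacunary hypothesis to the full conclusion. All the heavy analytic work (the interpolation inequalities, the polarization step, and the control of the constants $C^{k_{j+1}-k_j}$ via special base sequences) has already been performed in the three propositions, so the proof of the theorem is essentially a bookkeeping of hypotheses: matching the boundedness of $m_{k_{j+1}}/m_{k_j}$ to the hypothesis of each proposition, distinguishing the cases $j/m_j$ bounded versus $j/m_j\to 0$ to separate Roumieu from Beurling in the local setting, and handling compact support trivially in (b).
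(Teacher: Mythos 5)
Your proposal is correct and follows essentially the same route as the paper, whose proof of \Cref{thm:DC} is exactly the one-line reduction to \Cref{prop:keyglobal1} (for (a), with (b) obtained by intersecting with $C^\infty_c(\R^n)$) and \Cref{prop:key} (for (c) and (d)). The extra details you supply --- taking $m'_j=m_j$, using $\rh'=O(\rh)$ to handle the universal quantifier in the Beurling case, and interposing $\Om\Subset V\Subset U$ in the local setting --- are precisely the bookkeeping the paper leaves implicit.
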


\begin{proof}
    (a) follows from \Cref{prop:keyglobal1} and (b) is a consequence of (a) by intersecting with $C^\infty_c(\R^n)$.
    (c) and (d) follow from \Cref{prop:key}.
\end{proof}

In (c) and (d), the assumption that the base sequence $(k_j)$ is \emph{special}
cannot be omitted; this follows from \Cref{rem:special} below.

That $j/m_j$ is bounded (resp.\ tends to zero) is equivalent to the fact that $\cE^{\{M\}}_{L^\infty}(U)$
(resp.\ $\cE^{(M)}_{L^\infty}(U)$) contains $C^\om(U)$.
Note that $\cE^{[M]}_{L^p}(U) \subseteq\cE^{[M]}_{L^q}(U)$ 
if $1 \le q \le p \le \infty$.

Before we come to a similar result in the Gelfand--Shilov case, let us recall 
a result due to \cite{Chung:1996aa}. (In that paper only the Roumieu case is treated, but the Beurling case can be proved analogously, see 
\cite[Lemma 4]{Rainer:2019ab}.)
To this end we need some further conditions on the weight sequence.
We say that a weight sequence $M=(M_j)$ has \emph{moderate growth} if 
\begin{equation} \label{eq:mg}
    \sup_{j,k\ge1} \Big(\frac{M_{j+k}}{M_jM_k}\Big)^{1/(j+k)} <\infty.
\end{equation}
Note that \eqref{eq:mg} implies \eqref{eq:dc}.

\begin{proposition}[{\cite{Chung:1996aa}}] \label[p]{prop:GSdescript}
  Let $M=(M_j)$ be a weight sequence with moderate growth. 
Assume that $j/m_j$ is bounded in the Roumieu case and tends to zero in the Beurling case.
  Then the following are equivalent:  
\begin{enumerate}
    \item[(a)] $f \in \cS^{[M]}(\R^n)$.
    \item[(b)] There are constants $C,\rh,\si>0$ (resp.\ for all $\rh,\si>0$ there is $C>0$) 
        such that 
        \[
        \sup_x |x^\al f(x)| \le C (\rh m_{|\al|})^{|\al|}\quad \text{ and } \quad \sup_x |\p^\be f(x)| \le C (\si m_{|\be|})^{|\be|}
        \]
        for all $\al,\be \in \N^n$.
    \item[(c)] There are constants $C,\rh,\si>0$ (resp.\ for all $\rh,\si>0$ there is $C>0$) 
        such that 
        \[
        \sup_x |x^\al f(x)| \le C (\rh m_{|\al|})^{|\al|}\quad \text{ and }\quad \sup_x |x^\be \widehat f(x)| \le C (\si m_{|\be|})^{|\be|}
        \]
        for all $\al,\be \in \N^n$.
\end{enumerate}
\end{proposition}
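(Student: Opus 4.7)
Proof proposal: The implication (a)$\Rightarrow$(b) is trivial, obtained by specialising $\alpha=0$ or $\beta=0$ in the joint bound defining $\cS^{[M]}$. For (b)$\Leftrightarrow$(c), I would use the Fourier identities $\widehat{\p^\beta f}=(i\xi)^\beta\widehat f$ and $\widehat{x^\alpha f}=(-i)^{|\alpha|}\p^\alpha\widehat f$ together with $\|\widehat g\|_\infty\le\|g\|_{L^1}$. Converting the rapid $L^\infty$-decay of (a) into an $L^1$-bound via the inserted factor $(1+|x|)^{n+1}$ shifts the multi-index size by $n+1$, and moderate growth \eqref{eq:mg} reabsorbs the resulting factor $m_{n+1}^{n+1}$ into the constant. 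Applied to $f$ (giving (a)$\Rightarrow$(c)) and to $\widehat f$ (giving (c)$\Rightarrow$(b) by Fourier symmetry), this closes the loop, provided the main implication (b)$\Rightarrow$(a) is available.

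The crux is (b)$\Rightarrow$(a). My plan: fix $x_0\in\R^n$ with $|x_0|\ge 3$ and $v\in\mathbb S^{n-1}$, and apply \Cref{cor:aminterpol} on $U=B(x_0,1)\subset V=B(x_0,2)$ (the constant depends only on $n$) to obtain, for all integers $m>\ell\ge 1$,
\[
    \|d_v^\ell f\|_{L^\infty(U)} \le C^m\|f\|_{L^\infty(V)}^{1-\ell/m}\bigl(\|d_v^m f\|_{L^\infty(V)}^{\ell/m}+m^\ell\|f\|_{L^\infty(V)}^{\ell/m}\bigr).
\]
From (b) we have $\|d_v^m f\|_{L^\infty(V)}\le C(\sigma m_m)^m$, and applying the $x^\alpha f$-bound of (b) with $\alpha=ke_i$ for a coordinate $e_i$ with $|(x_0)_i|\gtrsim |x_0|/\sqrt n$ gives $\|f\|_{L^\infty(V)}\le C(\rho m_k)^k/|x_0|^k$ for every $k\ge 0$. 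Substituting and optimising $k,m$ along the lines of the proof of \Cref{prop:key} yields a pointwise estimate
\[
    |x_0|^{|\alpha|}\,|d_v^\ell f(x_0)|\le C'(\rho' m_{|\alpha|})^{|\alpha|}(\sigma' m_\ell)^\ell,
\]
uniformly in $x_0$ (the compact regime $|x_0|\le 3$ is handled directly by (b)). Polarising via \eqref{eq:polar} converts this into the joint bound of (a) with $\ell=|\beta|$.

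The main obstacle I expect is the optimisation of $k$ and $m$: the interpolation inequality produces products like $m_k^k\,m_m^m$ that have to be collapsed into the target $m_{|\alpha|}^{|\alpha|}m_{|\beta|}^{|\beta|}$. Moderate growth \eqref{eq:mg} is precisely what delivers this, being equivalent (up to a constant in the exponent) to subadditivity of $k\mapsto k\log m_k$. The hypothesis on $j/m_j$ controls the $m^\ell\|f\|^{\ell/m}$ term exactly as in \Cref{prop:key}, and distinguishes the Roumieu from the Beurling case: in the latter, the $o(1)$ assumption yields $\rho',\sigma'=O(\rho+\sigma)$, which is what allows the universal-$\rho,\sigma$ quantifier in the Beurling characterisation to propagate.
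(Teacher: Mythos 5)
The paper does not actually prove this proposition: it is quoted from \cite{Chung:1996aa} (Roumieu case), with the Beurling case delegated to \cite[Lemma 4]{Rainer:2019ab}. Your self-contained argument is therefore necessarily a different route, and as a strategy it is sound. The logical cycle (a)$\Rightarrow$(c)$\Rightarrow$(b)$\Rightarrow$(a) is correctly arranged: (a)$\Rightarrow$(c) genuinely needs the joint bound (to pass from $\sup_x(1+|x|)^{n+1}|\partial^\beta f|$ to $\|\partial^\beta f\|_{L^1}$ and hence to $\|\xi^\beta\widehat f\|_{L^\infty}$), whereas (c)$\Rightarrow$(b) needs only the bounds listed in (c), with moderate growth absorbing the index shift by $n+1$. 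The crux (b)$\Rightarrow$(a) via \Cref{cor:aminterpol} on translated balls works, but two points need more care than your sketch gives them. First, the decay bound must hold on all of $V=B(x_0,2)$, so you need $|x_i|\gtrsim|x_0|/\sqrt n$ to persist throughout $V$; this forces $|x_0|$ to exceed a dimensional constant (roughly $3\sqrt n$ rather than $3$), which only enlarges the compact regime handled directly by (b). Second, the ``optimisation'' of $k$ and $m$ is best made explicit as a fixed choice: take $m=2\ell$ and $k=2|\alpha|$ in the main term, so that moderate growth in the form $m_{2s}\le C m_s$ collapses $(\rho m_k)^{k(1-\ell/m)}=(\rho m_{2|\alpha|})^{|\alpha|}$ and $(\sigma m_m)^{m\ell/m}=(\sigma m_{2\ell})^{\ell}$ to the targets $(\rho' m_{|\alpha|})^{|\alpha|}$ and $(\sigma' m_\ell)^\ell$; for the remainder note that $m^\ell\|f\|^{\ell/m}\cdot\|f\|^{1-\ell/m}=m^\ell\|f\|_{L^\infty(V)}$, bound $\|f\|_{L^\infty(V)}$ using $k=|\alpha|$, and absorb $(2\ell)^\ell$ via the hypothesis that $\ell/m_\ell$ is bounded (resp.\ tends to $0$), exactly as in \Cref{prop:key}. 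With these choices the feared products $m_k^{\,\cdot}m_m^{\,\cdot}$ never get worse than $m_{2|\alpha|}^{|\alpha|}m_{2\ell}^{\ell}$ and the argument closes, including the Beurling quantifiers since $\rho'=O(\rho)$ and $\sigma'=O(\sigma)$. What each approach buys: the citation keeps the paper short and reuses the quantifier bookkeeping of \cite{Rainer:2019ab}, while your proof makes the section self-contained and shows that the interpolation machinery already assembled in Sections 2--3 suffices for the Gelfand--Shilov characterization.
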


In view of this proposition, we allow a second base sequence $(\ell_j)$ 
and define  
$\cS^{[M]}_{(k_j),(\ell_j)}(\R^n)$ to be the set of $f \in C^\infty(\R^n)$ such that 
there exist $C,\rh,\si>0$ (resp.\ for all $\rh,\si>0$ there is $C>0$) such that
\[
    \max_{|\al|=k_j}  \sup_x |x^\al f(x)| \le C (\rh m_{|\al|})^{|\al|}\quad \text{ and } \quad \max_{|\be|=\ell_j}\sup_x |\p^\be f(x)| \le C (\si m_{\ell_j})^{\ell_j}
        \]
        for all $j \ge 0$.

\begin{theorem}
    Let $(k_j)$ and $(\ell_j)$ be base sequences.
    Let $M=(M_j)$ be a weight sequence with moderate growth.
    Assume that  $m_{k_{j+1}}/m_{k_j}$ and $m_{\ell_{j+1}}/m_{\ell_j}$ are bounded.
    Then:
    \begin{enumerate}
        \item[(a)] $\cS^{\{M\}}_{(k_j),(\ell_j)}(\R^n) = \cS^{\{M\}}(\R^n)$ provided that $j/m_j$ is bounded.
        \item[(b)] $\cS^{(M)}_{(k_j),(\ell_j)}(\R^n) = \cS^{(M)}(\R^n)$ provided that $j/m_j \to 0$.
    \end{enumerate}
\end{theorem}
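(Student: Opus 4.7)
The plan is to derive both statements by combining the lacunary extension results of \Cref{sec:lacest} with the Gelfand--Shilov characterization \Cref{prop:GSdescript}. The inclusion $\cS^{[M]}(\R^n) \subseteq \cS^{[M]}_{(k_j),(\ell_j)}(\R^n)$ is immediate since the right-hand side only tests a subsequence of multi-indices, so everything reduces to the reverse inclusion.

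For the Roumieu case, suppose $f \in \cS^{\{M\}}_{(k_j),(\ell_j)}(\R^n)$, so that there exist $C,\rh,\si>0$ with
\[
\max_{|\al|=k_j} \|x^\al f\|_{L^\infty(\R^n)} \le C(\rh m_{k_j})^{k_j}, \qquad \max_{|\be|=\ell_j} \|\p^\be f\|_{L^\infty(\R^n)} \le C(\si m_{\ell_j})^{\ell_j},
\]
for all $j\ge 0$. First I apply \Cref{prop:keyglobal2} with $m'_j = m_j$ and base sequence $(k_j)$ (the hypothesis $m_{k_{j+1}}/m_{k_j}$ bounded is given) to extend the first family of estimates to all orders $\ell \ge 0$. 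Next I apply \Cref{prop:keyglobal1} with $p=\infty$, $m'_j = m_j$, and base sequence $(\ell_j)$ to extend the second family. After replacing the two resulting constants by their maximum, we have verified condition (b) of \Cref{prop:GSdescript}, which applies because $M$ has moderate growth and $j/m_j$ is bounded, yielding $f \in \cS^{\{M\}}(\R^n)$.

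For the Beurling case, the definition requires that for every $\rh'',\si''>0$ there exist $C'''>0$ giving the full separate bounds with constants $\rh''$ and $\si''$. Given such $\rh'',\si''$, the statements $\rh'=O(\rh)$ in \Cref{prop:keyglobal1} and $\si'=O(\si)$ in \Cref{prop:keyglobal2} allow me to choose $\rh,\si>0$ small enough, depending only on $\rh'',\si''$ and the implied constants, so that the extensions produced by these propositions have constants at most $\rh''$ and $\si''$ respectively. The lacunary Beurling hypothesis then furnishes a $C$ with which the lacunary estimates hold for this particular $\rh,\si$, and the two propositions deliver the desired full bounds. Invoking \Cref{prop:GSdescript} again (its Beurling version applies since $j/m_j\to 0$) concludes $f\in\cS^{(M)}(\R^n)$.

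The substantive work is entirely in the propositions already proved; the only delicate point is the Beurling bookkeeping, which however is designed into the quantitative $O$-statements of \Cref{prop:keyglobal1,prop:keyglobal2}. One should just verify that the passage from lacunary to full bounds in the Beurling setting for $x^\al f$ (rather than $\p^\al f$) is indeed uniform in the same way; this is visible from the proof of \Cref{prop:keyglobal2}, where the extension constant arises multiplicatively as $n^{3/2}\si$ times a factor depending only on the boundedness of $m_{k_{j+1}}/m_{k_j}$.
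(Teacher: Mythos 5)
Your proof is correct and follows exactly the route the paper takes: its proof of this theorem is the one-line citation of \Cref{prop:keyglobal1}, \Cref{prop:keyglobal2}, and \Cref{prop:GSdescript}, which you have simply spelled out, including the Beurling bookkeeping via the $\rh'=O(\rh)$ and $\si'=O(\si)$ statements. Nothing further is needed.
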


\begin{proof}
   This follows from \Cref{prop:keyglobal1}, \Cref{prop:keyglobal2}, and \Cref{prop:GSdescript}.
\end{proof}

\begin{remark} \label[r]{rem:Liess}
    Liess \cite{Liess:1990aa} (extending an argument of \cite{Bolley:1991ab}) proved that, under a number of assumptions on the sequence $M=(M_j)$,
    the equality $\cE^{\{M\}}_{L^2,(k_j)}(U)= \cE^{\{M\}}_{L^2}(U)$ implies that $m_{k_{j+1}}/m_{k_j}$ is bounded.
    His assumptions are the following:
    \begin{enumerate}
        \item[(a)] $m_j \le m_{j+1}$ for all $j$ and $j/m_j$ is bounded. 
        \item[(b)] $m_{j+1}/m_j$ is bounded.
        \item[(c)] There exists $g : \N \to (0,\infty)$ such that $m_{kj} \le g(k)m_j$ for all $k$ and $j$.
        \item[(d)] For every $c>0$ there is $c'>0$ such that $m_j \le c \, m_k$ implies $j \le c' k$.  
    \end{enumerate}
    Let us assume that $M$ is a weight sequence of moderate growth such that $j/m_j$ is bounded.
    Then it is easily seen that the conditions (a)--(c) are satisfied.
    On the other hand, for a weight sequence,  (c) is equivalent to \eqref{eq:mg} (take $k=2$ and use \cite[Theorem 1]{Matsumoto84}).

    Observe that, assuming $m_j \le m_{j+1}$, 
    (d) is equivalent to the existence of an integer $n\ge 2$ such that $\liminf_{k \to \infty} m_{nk}/m_k =: a > 1$.
    Indeed, for all $\ell\ge 1$ and large $k$, we have $a^\ell  m_k \le m_{n^\ell k}$. 
    If $a>1$ and $m_j \le c\, m_{k}$, then $c/a^{-\ell}<1$, provided that $\ell$ is large enough, and thus $m_j < m_{n^\ell k}$.
    As $m_j$ is increasing, we conclude that $j \le n^\ell k$. 
It is clear that $a=1$ violates (d).
\end{remark}

\section{Other ultradifferentiable classes} \label{sec:other}

Classically, besides Denjoy--Carleman classes 
a lot of attention was devoted to ultradifferentiable classes, sometimes called Braun--Meise--Taylor classes 
due to the foundational article \cite{BMT90}, 
defined in terms of a weight function; they have their origin in work of Beurling.

A \emph{weight function} is a continuous increasing function $\om : [0,\infty) \to [0,\infty)$ 
satisfying $\om(0)=0$, $\om(2t)=O(\om(t))$, $\log t = o(\om(t))$ as $t \to \infty$ 
and such that $\vh(t) := \om(e^t)$ is convex. 
It is no restriction to assume $\om(t)=0$ for $0 \le t \le 1$ (cf.\ \cite[Section 11.1]{Rainer:2021aa}).

The local classes $\cE^{\{\om\}}_{L^p}(U)$ and $\cE^{(\om)}_{L^p}(U)$ are defined in analogy to Denjoy--Carleman classes, where 
now
\begin{equation}
    \label{eq:DC6}
    \max_{|\al|=k} \|\p^\al f\|_{L^p(\Om)} \le C\, \exp\big( \tfrac{1}{\rh} \vh^*(\rh k)\big), \quad k \ge 0,
\end{equation}
with $\vh^*(s):= \sup_{t\ge 0} (st -\vh(t))$,
plays the role of \eqref{eq:DC3}. More precisely, we have 
\begin{align*}
    \cE^{\{\om\}}_{L^p}(U) &:= \bigcap_{\Om \Subset U} \bigcup_{\rh>0}\bigcup_{C>0} \{ f \in C^\infty(U) :  f \text{ satisfies } \eqref{eq:DC6}\},  
    \\
    \cE^{(\om)}_{L^p}(U) &:= \bigcap_{\Om \Subset U}  \bigcap_{\rh>0}\bigcup_{C>0} \{ f \in C^\infty(U) :  f \text{ satisfies } \eqref{eq:DC6}\}.
\end{align*}
The global classes $\cB^{[\om]}_{L^p}(\R^n)$, $\cS^{[\om]}(\R^n)$, and $\cD^{[\om]}(\R^n)$
are defined in a straightforward way. 
Furthermore, we consider $\cE^{[\om]}_{L^p,(k_j)}(U)$, $\cB^{[\om]}_{L^p,(k_j)}(\R^n)$, $\cS^{[\om]}_{(k_j),(\ell_j)}(\R^n)$, 
and $\cD^{[\om]}_{(k_j)}(\R^n)$, all of them defined in the obvious manner.
(Note that $\cD^{[\om]}(\R^n)$ is nontrivial if and only if $\int_1^\infty t^{-2}\om(t)\,dt <\infty$; cf.\ \cite{BMT90}.)

There is an overarching framework for ultradifferentiable classes introduced in \cite{RainerSchindl12} 
which goes beyond Denjoy--Carleman and Braun--Meise--Taylor classes and, on a technical level, reduces 
the proofs to handling certain families of weight sequences. 
See \cite{Rainer:2021aa} for a comprehensive survey of the theory.

From now on, let $\fM$ be a totally ordered family of weight sequences, i.e., if $M,M' \in \fM$, then either $M_j \le M'_j$ 
or $M'_j \le M_j$ for all $j$.

We define 
\begin{align*}
    \cE^{\{\fM\}}_{L^p}(U) &:= \bigcap_{\Om \Subset U} \bigcup_{M\in \fM} \bigcup_{\rh>0}\bigcup_{C>0} \{ f \in C^\infty(U) :  f \text{ satisfies } \eqref{eq:DC3}\},  
    \\
    \cE^{(\fM)}_{L^p}(U) &:= \bigcap_{\Om \Subset U} \bigcap_{M\in \fM} \bigcap_{\rh>0}\bigcup_{C>0} \{ f \in C^\infty(U) :  f \text{ satisfies } \eqref{eq:DC3}\},  
\end{align*}
and in the evident analogous way also the global classes 
$\cB^{[\fM]}_{L^p}(\R^n)$, $\cS^{[\fM]}(\R^n)$, and $\cD^{[\fM]}(\R^n)$
as well as
$\cE^{[\fM]}_{L^p,(k_j)}(U)$, $\cB^{[\fM]}_{L^p,(k_j)}(\R^n)$, $\cS^{[\fM]}_{(k_j),(\ell_j)}(\R^n)$, 
and $\cD^{[\fM]}_{(k_j)}(\R^n)$.

\begin{theorem} \label[t]{thm:BMT}
Let $1 \le p \le \infty$.
    Let $\om$ be a weight function.
  For  $\fM := \{M^{(\rh)}\}_{\rh>0}$, where $M^{(\rh)}_k := \exp(\tfrac{1}{\rh} \vh^*(\rh k))$, we have the identities: 
    \begin{align*}
        \cE^{[\om]}_{L^p}(U) =  \cE^{[\fM]}_{L^p}(U),
        &\quad \cE^{[\om]}_{L^p,(k_j)}(U) =  \cE^{[\fM]}_{L^p,(k_j)}(U),
        \\
        \cB^{[\om]}_{L^p}(\R^n) =  \cB^{[\fM]}_{L^p}(\R^n),
        &\quad \cB^{[\om]}_{L^p,(k_j)}(\R^n) =  \cB^{[\fM]}_{L^p,(k_j)}(\R^n),
        \\
        \cS^{[\om]}(\R^n) =  \cS^{[\fM]}(\R^n),
        &\quad \cS^{[\om]}_{(k_j),(\ell_j)}(\R^n) =  \cS^{[\fM]}_{(k_j),(\ell_j)}(\R^n),
        \\
        \cD^{[\om]}(\R^n) =  \cD^{[\fM]}(\R^n),
        &\quad \cD^{[\om]}_{(k_j)}(\R^n) =  \cD^{[\fM]}_{(k_j)}(\R^n),
    \end{align*}
    where $(k_j)$ and $(\ell_j)$ are base sequences.
\end{theorem}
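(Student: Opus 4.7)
The plan is to reduce every identity in the theorem to a single \emph{absorption lemma} for the family $\{M^{(\rh)}\}_{\rh>0}$: for every $\rh,\si>0$ there exist $\rh_1>0$ and $D>0$ with
\[
    \si^k M^{(\rh)}_k \le D\, M^{(\rh_1)}_k \quad \text{for every } k \ge 0,
\]
and, symmetrically, for every $\rh_1,\si>0$ there exist $\rh>0$ and $D>0$ for which the same inequality holds. This is a standard consequence, via the Young conjugate $\vh^*$, of the regularity condition $\om(2t)=O(\om(t))$ built into the definition of a weight function (cf.\ \cite[Section 11.1]{Rainer:2021aa}). It is the only substantive ingredient and therefore the step I expect to carry the most content, although it is classical and I would invoke it by reference rather than re-prove it.

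Granted the absorption lemma, the identity $\cE^{[\om]}_{L^p}(U)=\cE^{[\fM]}_{L^p}(U)$ becomes a pure rewriting of the definitions. Setting $m^{(\rh)}_k:=(M^{(\rh)}_k)^{1/k}$, the defining inequality \eqref{eq:DC6} reads $\|\p^\al f\|_{L^p(\Om)}\le C\,M^{(\rh)}_{|\al|}=C\,(1\cdot m^{(\rh)}_{|\al|})^{|\al|}$, whereas membership in $\cE^{[\fM]}_{L^p}(U)$ is expressed by an estimate of the shape $\|\p^\al f\|_{L^p(\Om)}\le C\,(\si\,m^{(\rh)}_{|\al|})^{|\al|}=C\,\si^{|\al|}\,M^{(\rh)}_{|\al|}$. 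In the Roumieu case the choice $\si=1$ gives the inclusion $\cE^{\{\om\}}_{L^p}(U)\subseteq \cE^{\{\fM\}}_{L^p}(U)$, while the absorption lemma allows me to replace any $\si^k M^{(\rh)}_k$ estimate by $D\,M^{(\rh_1)}_k$ for a new $\rh_1$, yielding the reverse inclusion. In the Beurling case the argument is verbatim with ``exists $\rh$'' replaced by ``for all $\rh$'', using the symmetric form of the lemma.

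The remaining identities are variations on the same theme. For $\cB^{[\om]}_{L^p}(\R^n)=\cB^{[\fM]}_{L^p}(\R^n)$ the proof is literally the same with $L^p(\R^n)$ in place of $L^p(\Om)$. For the lacunary variants $\cE^{[\om]}_{L^p,(k_j)}$, $\cB^{[\om]}_{L^p,(k_j)}$, and $\cD^{[\om]}_{(k_j)}$, I restrict the quantifier over multi-indices to $|\al|=k_j$; the absorption lemma is indifferent to this restriction because it holds for every $k$. For $\cS^{[\om]}(\R^n)=\cS^{[\fM]}(\R^n)$ and its lacunary version, two independent scaling parameters control the decay of $x^\al f$ and the growth of $\p^\be f$ respectively, and I simply invoke the absorption lemma once for each. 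Finally, $\cD^{[\om]}(\R^n)=\cD^{[\fM]}(\R^n)$ and its lacunary version drop out by intersecting the already-established identity for $\cB^{[\cdot]}_{L^\infty}$ with $C^\infty_c(\R^n)$.
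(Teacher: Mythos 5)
Your proposal is correct and follows essentially the same route as the paper: the paper's proof also rests entirely on the absorption property \eqref{eq:BMT} (quoted from \cite[(5.10)]{RainerSchindl12}, with $\rh_1=H\rh$ and $H$ depending only on $\si$, which yields your ``symmetric'' Beurling form by applying it to $\rh/H$), and then reduces every identity to the same quantifier bookkeeping, including the two-parameter application in the Gelfand--Shilov case and the intersection with $C^\infty_c(\R^n)$ for $\cD$. No gaps.
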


\begin{proof}
    This is shown by the proof of \cite[Lemma 5.14]{RainerSchindl12} which is based
    on the following property of $\fM$, see \cite[(5.10)]{RainerSchindl12},
    \begin{equation} \label{eq:BMT}
        \A \si>0 \E H\ge 1 \A \rh>0 \E C\ge 1 \A k \in \N : \si^k M^{(\rh)}_k \le C\, M^{(H\rh)}_k.
    \end{equation}
In fact, for given $\rh,\si>0$
there exist $C,H\ge 1$ such that 
\begin{align*}
    \sup_{j \ge 0}\frac{\max_{|\al|=k_j}\|\p^\al f\|_{L^p(\Om)}}{M^{(H\rh)}_{k_j}} 
    \le C\, \sup_{j \ge 0}\frac{\max_{|\al|=k_j}\|\p^\al f\|_{L^p(\Om)}}{\si^{k_j} M^{(\rh)}_{k_j}}
\end{align*}
and $M^{(H\rh)}_{k_j} = \exp(\tfrac{1}{H\rh} \vh^*(H\rh k_j))$.
In the Gelfand--Shilov case, we observe that additionally 
\begin{align*}
    \sup_{j \ge 0}\frac{\max_{|\al|=\ell_j} \sup_x |x^\al f(x)|}{M^{(H\rh)}_{\ell_j}} 
    \le C\, \sup_{j \ge 0}\frac{\max_{|\al|=\ell_j} \sup_x |x^\al f(x)|}{\si^{\ell_j} M^{(\rh)}_{\ell_j}}.
\end{align*}
The case $\cS^{[\om]}(\R^n) =  \cS^{[\fM]}(\R^n)$ follows from
\begin{align*}
    \sup_{\al,\,\be}\frac{ \sup_x |x^\al \p^\be f(x)|}{M^{(H\rh)}_{|\al|}M^{(H\rh)}_{|\be|}} 
    \le C\, \sup_{\al,\,\be}\frac{ \sup_x |x^\al\p^\be f(x)|}{\ta^{|\al|}\si^{|\be|} M^{(\rh)}_{|\al|}M^{(\rh)}_{|\be|}}.
\end{align*}
taking constants $C,H$ which work for both $\si$ and $\ta$.
\end{proof}

The family $\fM$ from \Cref{thm:BMT} has an inherent moderate growth property (see \cite[(5.6)]{RainerSchindl12}):
\begin{equation} \label{eq:ommg}
    M^{(\rh)}_{j+k} \le M^{(2\rh)}_j M^{(2\rh)}_k, \quad j,k\ge 0.
\end{equation}
In general, we say that $\fM$ has [\emph{moderate growth}] if, in the Roumieu case (i.e., $[\cdot]=\{\cdot\}$),
\[    
\A M \in \fM \E M' \in \fM \E C>0 \A j,k\ge 0 : M_{j+k}\le C^{j+k} M'_j M'_k,  
\]
and, in the Beurling case (i.e., $[\cdot]=(\cdot)$),
\[    
\A M \in \fM \E M' \in \fM \E C>0 \A j,k\ge 0 : M'_{j+k}\le C^{j+k} M_j M_k.
\]

\begin{lemma} \label[l]{lem:mg}
    If $(k_j)$ is a special base sequence and $\fM$ has [moderate growth], then, in the Roumieu case (i.e., $[\cdot]=\{\cdot\}$),
    \begin{equation}   \label{eq:condR} 
    \A M \in \fM \E M' \in \fM : m_{k_{j+1}}/ m'_{k_j} \text{ is bounded},  
\end{equation}
and, in the Beurling case (i.e., $[\cdot]=(\cdot)$),
\begin{equation}  \label{eq:condB}  
\A M \in \fM \E M' \in \fM : m'_{k_{j+1}}/ m_{k_j} \text{ is bounded}.
\end{equation}
\end{lemma}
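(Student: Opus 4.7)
The plan is to absorb the geometric gap $k_{j+1}/k_j$ by iterating the [moderate growth] property within $\fM$. Since $(k_j)$ is special, there exists $L \ge 2$ with $k_{j+1} \le L k_j$ for all $j \ge 1$; fix an integer $r \ge 1$ with $2^r \ge L$. This fixed $r$ will determine how many times moderate growth must be applied.

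In the Roumieu case, I would iteratively apply moderate growth to produce a chain $M = M_0, M_1, \ldots, M_r \in \fM$ and constants $C_1, \ldots, C_r > 0$ such that $(M_{s-1})_{j+k} \le C_s^{j+k} (M_s)_j (M_s)_k$ for $s=1,\ldots,r$. A straightforward induction on $r$, based on the dyadic splitting $2^s k = 2^{s-1} k + 2^{s-1} k$, then yields the consolidated estimate
\[
    M_{2^r k} \le \Big(\prod_{s=1}^r C_s\Big)^{2^r k} (M_r)_k^{2^r}, \qquad k \ge 1,
\]
and taking $(2^r k)$-th roots gives $m_{2^r k} \le (\prod_{s=1}^r C_s) (m_r)_k$. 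Since $m$ is increasing and $k_{j+1} \le 2^r k_j$, the choice $M' := M_r$ yields $m_{k_{j+1}} \le m_{2^r k_j} \le (\prod C_s) (m')_{k_j}$, which establishes \eqref{eq:condR}.

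The Beurling case is handled symmetrically: iterating the Beurling version of moderate growth produces $M_0 = M, M_1, \ldots, M_r \in \fM$ with $(M_s)_{j+k} \le C_s^{j+k} (M_{s-1})_j (M_{s-1})_k$ for $s=1,\ldots,r$, and the analogous induction delivers $(M_r)_{2^r k} \le (\prod C_s)^{2^r k} M_k^{2^r}$, hence $(m_r)_{2^r k} \le (\prod C_s) m_k$. Setting $M' := M_r$ and using monotonicity of $m_r$, one concludes $(m')_{k_{j+1}} \le (m_r)_{2^r k_j} \le (\prod C_s) m_{k_j}$, proving \eqref{eq:condB}.

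The only real work is the bookkeeping of the iteration; the induction step is elementary, but one must verify that each successive application of moderate growth produces an element of $\fM$, which is immediate from the definition of [moderate growth] for a family. I note that the total ordering of $\fM$ plays no role in this argument.
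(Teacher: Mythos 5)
Your proof is correct and follows essentially the same route as the paper: replace $k_{j+1}$ by a fixed multiple of $k_j$ using that the base sequence is special, iterate [moderate growth] to bound $M_{a k_j}$ by a power of $M'_{k_j}$ times a geometric factor, and take $k$-th roots using that $m_j=M_j^{1/j}$ is increasing. The only difference is that you spell out the dyadic iteration and the consolidation of constants, which the paper's proof leaves implicit.
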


\begin{proof}
    By assumption, there exists a positive integer $a$ such that $k_{j+1} \le a k_j$ for all $j\ge 1$.
Moderate growth in the Roumieu case guarantees that, for given $M\in \fM$ there exist $C>0$ and $M' \in \fM$ (depending on $a$) such that
\[
    M_{a k_j} \le C^{ak_j} (M'_{k_j})^a.
\]
This implies $m_{k_{j+1}} \le m_{ak_j} \le C m'_{k_j}$.
The Beurling case is similar.
\end{proof}

\begin{theorem} \label[t]{thm:fM}
    Let $1 \le p \le \infty$. 
    Let $(k_j), (\ell_j)$ be base sequences.
    Let $\fM$ satisfy \eqref{eq:condR} in the Roumieu case and \eqref{eq:condB} in the Beurling case. 
    We have:
    \begin{enumerate}
        \item[(a)]  $\cB^{[\fM]}_{L^p,(k_j)}(\R^n)= \cB^{[\fM]}_{L^p}(\R^n)$.
        \item[(b)]  $\cD^{[\fM]}_{(k_j)}(\R^n)= \cD^{[\fM]}(\R^n)$.
    \end{enumerate}
    Assume additionally that, for all $M \in \fM$, $j/m_j$ is bounded, in the Roumieu case, and tends to zero, in the Beurling case.
    Then:
    \begin{enumerate}
        \item[(c)]  $\cE^{[\fM]}_{L^p,(k_j)}(U)= \cE^{[\fM]}_{L^p}(U)$ for all open $U \subseteq \R^n$, provided that $(k_j)$ is a special base sequence.
        \item[(d)]  $\cS^{[\fM]}_{(k_j),(\ell_j)}(\R^n)= \cS^{[\fM]}(\R^n)$, provided that $\fM$ has [moderate growth].
    \end{enumerate}
\end{theorem}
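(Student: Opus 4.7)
The strategy is to extend the proof of \Cref{thm:DC} from a single weight sequence $M$ to the totally ordered family $\fM$. All inclusions $\supseteq$ are immediate, so attention focuses on $\subseteq$. In every part the mechanism is the same: starting from the lacunary hypothesis bounds, the conditions \eqref{eq:condR}/\eqref{eq:condB} together with the total ordering of $\fM$ supply a source sequence and a target sequence, both lying in $\fM$, to which the appropriate key proposition (\Cref{prop:keyglobal1}, \Cref{prop:key}, or \Cref{prop:keyglobal2}) applies.

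For (a) in the Roumieu case, take the $M \in \fM$ furnished by the lacunary hypothesis, use \eqref{eq:condR} to produce $M' \in \fM$ with $m_{k_{j+1}}/m'_{k_j}$ bounded, and (passing to the larger of $M,M'$ in $\fM$ if necessary by total ordering) assume $M \le M'$; then \Cref{prop:keyglobal1} produces the full bound with $M' \in \fM$. In the Beurling case, given any target $M^* \in \fM$ and $\rho^* > 0$, apply \eqref{eq:condB} with input $M^*$ to obtain $M' \in \fM$ with $m'_{k_{j+1}}/m^*_{k_j}$ bounded. If $M' \le M^*$ (total ordering), take $M'$ as source; otherwise $M' \ge M^*$ already forces $m^*_{k_{j+1}}/m^*_{k_j}$ to be bounded, so $M^*$ itself serves as source. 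The hypothesis supplies lacunary bounds for the source with arbitrarily small $\rho$, and the ``$\rho' = O(\rho)$'' statement in \Cref{prop:keyglobal1} matches the given $\rho^*$. Part (b) is obtained from (a) by intersecting with $C^\infty_c(\R^n)$, and Part (c) follows by the identical scheme with \Cref{prop:key} replacing \Cref{prop:keyglobal1}; the specialness of $(k_j)$ and the hypothesis on $j/m_j$ are exactly the conditions that \Cref{prop:key} requires in its Roumieu and Beurling directions.

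For Part (d), \Cref{lem:mg} first translates [moderate growth] of $\fM$ into \eqref{eq:condR}/\eqref{eq:condB} for both base sequences $(k_j)$ and $(\ell_j)$. From $f \in \cS^{[\fM]}_{(k_j),(\ell_j)}(\R^n)$ we apply \Cref{prop:keyglobal2} to upgrade the lacunary multiplicative bounds at orders $k_j$ to full multiplicative bounds, and \Cref{prop:keyglobal1} (with $p = \infty$) to upgrade the lacunary derivative bounds at orders $\ell_j$ to full derivative bounds; the total ordering of $\fM$ then allows both upgrades to be realised with a single common weight sequence in $\fM$. A family analogue of \Cref{prop:GSdescript}---in which the single-weight moderate-growth step of its proof is replaced by [moderate growth] of $\fM$---then assembles the two separate families of bounds into the joint Gelfand--Shilov estimate, yielding $f \in \cS^{[\fM]}(\R^n)$.

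The main obstacle is Part (d): beyond the bookkeeping required for two independent base sequences, one must pass from separate $x^\al$- and $\p^\be$-bounds to the joint $x^\al \p^\be$-bound defining $\cS^{[\fM]}(\R^n)$. In the single-weight setting this is precisely the content of \Cref{prop:GSdescript}, whose proof uses the classical moderate growth \eqref{eq:mg}; in the family setting, classical moderate growth of an individual sequence is no longer available, and its role must be taken over by [moderate growth] of $\fM$, which is exactly the hypothesis imposed in (d).
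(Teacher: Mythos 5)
Your proposal follows the paper's proof essentially verbatim: the total ordering of $\fM$ is used to arrange $m_j \le m'_j$ for the pair supplied by \eqref{eq:condR}/\eqref{eq:condB}, after which \Cref{prop:keyglobal1}, \Cref{prop:key}, and \Cref{prop:keyglobal2} give (a)--(c) and the multiplicative half of (d), and (d) is completed by the family version of \Cref{prop:GSdescript} from \cite{Rainer:2019ab}. Your explicit case analysis for the Beurling quantifiers (fixing the target $M^*$ and $\rho^*$ first, producing a source via \eqref{eq:condB} and total ordering, and invoking the $\rho'=O(\rho)$ clause) is exactly the intended unpacking of the paper's one-line argument.

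One inaccuracy in (d): you propose to obtain \eqref{eq:condR}/\eqref{eq:condB} for the second base sequence $(\ell_j)$ from [moderate growth] via \Cref{lem:mg}, but that lemma requires the base sequence to be \emph{special}, whereas the theorem assumes only that $(\ell_j)$ is a base sequence. The intended reading is that the blanket hypothesis \eqref{eq:condR}/\eqref{eq:condB} is imposed for $(\ell_j)$ as well (compare the single-weight analogue, where boundedness of both $m_{k_{j+1}}/m_{k_j}$ and $m_{\ell_{j+1}}/m_{\ell_j}$ is assumed outright); with that reading your argument for (d) goes through unchanged, and [moderate growth] is needed only for the Gelfand--Shilov description lemma.
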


\begin{proof}
    Since the sequences in $\fM$ are totally ordered, we may assume that the sequences provided by \eqref{eq:condR} and \eqref{eq:condB}
    also satisfy $m_j \le m'_j$ or $m'_j \le m_j$ for all $j$, respectively.
Then (a), (b), and (c) follow easily from \Cref{prop:keyglobal1} and \Cref{prop:key}.
Finally, (d) follows from \Cref{prop:keyglobal1}, \Cref{prop:keyglobal2}, and 
the generalization \cite[Lemma 4]{Rainer:2019ab} of \Cref{prop:GSdescript}.
\end{proof}

To deduce a version for Braun--Meise--Taylor classes, it suffices to 
note that for a weight function $\om$ and $\fM = \{M^{(\rh)}\}_{\rh>0}$ the associated family from \Cref{thm:BMT},
\begin{itemize}
    \item $\om(t)=O(t)$ as $t\to \infty$ if and only if $j/m^{(\rh)}_j$ is bounded for some $\rh>0$,
\item $\om(t)=o(t)$ as $t\to \infty$ if and only if $j/m^{(\rh)}_j \to 0$ for all $\rh>0$;
\end{itemize} 
cf.\ \cite[Lemma 5.7 and Corollary 5.15]{RainerSchindl12}.

\begin{theorem} \label[t]{thm:om}
    Let $1 \le p \le \infty$. 
    Let $(k_j)$, $(\ell_j)$ be special base sequences.
    Let $\om$ be a weight function. 
    We have:
    \begin{enumerate}
        \item[(a)]  $\cB^{[\om]}_{L^p,(k_j)}(\R^n)= \cB^{[\om]}_{L^p}(\R^n)$.
        \item[(b)]  $\cD^{[\om]}_{(k_j)}(\R^n)= \cD^{[\om]}(\R^n)$.
    \end{enumerate}
    Assume additionally that $\om(t)=O(t)$ as $t\to\infty$, 
    in the Roumieu case, and $\om(t)=o(t)$, in the Beurling case.
    Then:
    \begin{enumerate}
        \item[(c)]  $\cE^{[\om]}_{L^p,(k_j)}(U)= \cE^{[\om]}_{L^p}(U)$ for all open $U \subseteq \R^n$.
        \item[(d)]  $\cS^{[\om]}_{(k_j),(\ell_j)}(\R^n)= \cS^{[\om]}(\R^n)$.
    \end{enumerate}
\end{theorem}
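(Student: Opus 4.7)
The plan is to reduce \Cref{thm:om} to \Cref{thm:fM} via the identifications provided by \Cref{thm:BMT}. Set $\fM := \{M^{(\rh)}\}_{\rh>0}$ to be the family associated to $\om$. By \Cref{thm:BMT}, each of the eight $[\om]$-classes in the statement (with or without the lacunary restriction) coincides with the corresponding $[\fM]$-class, so it suffices to verify the hypotheses of \Cref{thm:fM} for this family.

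First, the inherent property \eqref{eq:ommg}, namely $M^{(\rh)}_{j+k}\le M^{(2\rh)}_j M^{(2\rh)}_k$, gives [\emph{moderate growth}] for $\fM$: in the Roumieu case, to $M=M^{(\rh)}$ one associates $M'=M^{(2\rh)}$; in the Beurling case, one associates $M'=M^{(\rh/2)}$, so that $M'_{j+k}\le M_j M_k$ (in both cases with $C=1$). Because $(k_j)$ and $(\ell_j)$ are special, \Cref{lem:mg} then supplies both \eqref{eq:condR} and \eqref{eq:condB} for $\fM$. Parts (a) and (b) follow at once from \Cref{thm:fM}(a),(b), with no growth hypothesis on $\om$ needed.

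For (c) and (d), the remaining ingredient is the $j/m_j$ condition. In the Beurling case, the second bullet before the theorem is an exact match: $\om(t)=o(t)$ is equivalent to $j/m^{(\rh)}_j\to 0$ for every $\rh>0$. The Roumieu case is more delicate, as the first bullet yields only boundedness of $j/m^{(\rh_0)}_j$ for some $\rh_0>0$. Here I would use that $\rh\mapsto M^{(\rh)}_k$ is increasing; this follows because $\vh^*$ is convex with $\vh^*(0)=0$, so $s\mapsto\vh^*(s)/s$ is increasing on $(0,\infty)$, and $M^{(\rh)}_k=\exp(k\,\vh^*(\rh k)/(\rh k))$. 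Consequently $j/m^{(\rh)}_j$ remains bounded for every $\rh\ge\rh_0$, and I would work with the cofinal subfamily $\fM_0:=\{M^{(\rh)}:\rh\ge\rh_0\}$.

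The step needing care is to check that this replacement does not alter any of the four Roumieu classes involved in (c) and (d). By the same monotonicity, any Roumieu estimate (lacunary or not, on derivatives or in the Gelfand--Shilov setup) valid for $M^{(\rh)}$ with $\rh<\rh_0$ is automatically valid for $M^{(\rh_0)}$ with the same internal constants, so the union structure of the Roumieu $[\fM]$-classes yields equality with the corresponding $[\fM_0]$-classes. The family $\fM_0$ inherits [\emph{moderate growth}] (the $M'$ chosen above lies in $\fM_0$ whenever $M$ does), and the $j/m_j$ hypothesis now holds uniformly across $\fM_0$. Applying \Cref{thm:fM}(c),(d) to $\fM_0$ closes the argument. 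The main obstacle is precisely this Roumieu cofinality step; everything else is a routine check of the hypotheses of \Cref{thm:fM}.
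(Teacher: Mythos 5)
Your proposal is correct and follows exactly the paper's route: the paper's proof is a one-line reduction via \Cref{thm:BMT}, \Cref{lem:mg} (using the inherent moderate growth \eqref{eq:ommg}), and \Cref{thm:fM}. Your extra care with the Roumieu-case mismatch between ``$j/m^{(\rh)}_j$ bounded for \emph{some} $\rh$'' and the ``for all $M\in\fM$'' hypothesis of \Cref{thm:fM}, resolved by monotonicity of $\rh\mapsto M^{(\rh)}_k$ and passing to the cofinal subfamily $\{M^{(\rh)}:\rh\ge\rh_0\}$, is a legitimate detail the paper leaves implicit, and your handling of it is sound.
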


\begin{proof}
    This is a corollary of \Cref{thm:BMT}, \Cref{lem:mg}, and \Cref{thm:fM}.
\end{proof}

\begin{remark}
    The family $\fM=\{M^{(\rh)}\}_{\rh>0}$ associated with a weight function in \Cref{thm:BMT} 
    has the following property:
    \begin{itemize}
        \item[(a)] Let $0<\si\le\rh$. 
            If there is $C_1>0$ such that $m^{(\si)}_j \le C_1\, m^{(\rh)}_k$, then there is $C_2=C_2(\si,\rh,C_1) >0$
            such that $j \le C_2\, k$.
    \end{itemize}
    So, in this case, validity of \eqref{eq:condR} or \eqref{eq:condB} entails that $(k_j)$ is a special base sequence;
    the converse holds by \eqref{eq:ommg} and \Cref{lem:mg}.

    To see (a), note first that
    for any positive integer $N$ we have
    \[
        m_k^{(N\rh)} = \exp(\tfrac{1}{N\rh k} \vh^*(N\rh k)) = m_{Nk}^{(\rh)}.
    \]
    By \eqref{eq:BMT}, there exist $C=C(\rh)\ge1$ and an integer $H\ge 2$ such that 
    \[
        4 m_k^{(\rh)} \le C^{1/k} m_k^{(H\rh)} = C^{1/k} m_{Hk}^{(\rh)}, \quad k \ge 0.
    \]
    For $k\ge k_0$, we have $C^{1/k}< 2$ and hence 
    $2 m_k^{(\rh)} < m_{Hk}^{(\rh)}$ and, by iteration,
    \[
2^\ell m_k^{(\rh)} < m_{H^\ell k}^{(\rh)}.
    \]
     If $m^{(\si)}_j \le C_1\, m^{(\rh)}_k$, then, choosing $\ell$ such that $C_12^{-\ell} <1$, 
     we find
     \[
         m^{(\si)}_j < m^{(\rh)}_{H^\ell k}.
     \]
     Then the assumption $j \ge \lceil \frac{\rh}{\si} \rceil H^\ell k$ leads to
     \[
         m^{(\rh)}_{H^\ell k} \le m^{(\lceil \frac{\rh}{\si} \rceil\si)}_{ H^\ell k}=m^{(\si)}_{\lceil \frac{\rh}{\si} \rceil H^\ell k} < m^{(\rh)}_{H^\ell k},
     \]
     a contradiction. Thus $j < C_2\, k$, where $C_2 := \lceil \frac{\rh}{\si} \rceil H^\ell$.
For $k < k_0$, 
there are only finitely many $j$ satisfying $m_j^{(\si)} \le C_1 \, m_{k_0}^{(\rh)}$ 
(since $\lim_{j \to \infty}m^{(\si)}_j=\infty$; see \cite[Remark 1.3]{BMT90}).
So adding this bound to $C_2$ gives the assertion for all $k$.
\end{remark}

\begin{remark}
    Under the [moderate growth] assumption, we have, for $1 \le p \le q \le \infty$,  the continuous inclusions
    $\cB^{[\fM]}_{L^p}(\R^n) \subseteq \cB^{[\fM]}_{L^q}(\R^n)$ and $\cE^{[\fM]}_{L^p}(U) = \cE^{[\fM]}_{L^q}(U)$,
    in particular,
    $\cB^{[\om]}_{L^p}(\R^n) \subseteq \cB^{[\om]}_{L^q}(\R^n)$ and $\cE^{[\om]}_{L^p}(U) = \cE^{[\om]}_{L^q}(U)$.
\end{remark}

\section{Comparison with some recent results} \label{sec:compare}

In this section, we compare our result with the ones obtained by Albano and Mughetti \cite{Albano:2023aa} 
and we discuss optimality of the conditions imposed on the base sequence $(k_j)$ and on the weights.

Let $M=(M_j)$ be a weight sequence.
Let $I \subseteq \R$ be a compact interval.
We define 
\begin{align*}
    \cB^{\{M\}}_{L^\infty}(I) &:= \{f \in C^\infty(I) : \E C,\rh>0 \A j \ge 0 : \|f^{(j)}\|_{L^\infty(I)} \le C(\rh m_{j})^j \}, 
    \\
    \cB^{(M)}_{L^\infty}(I) &:= \{f \in C^\infty(I) : \A\rh>0 \E C>0 \A j \ge 0 : \|f^{(j)}\|_{L^\infty(I)} \le C(\rh m_{j})^j \}, 
\end{align*}
as well as $\cB^{[M]}_{L^\infty,(k_j)}(I)$ in the obvious way.

\begin{theorem} \label[t]{thm:CG}
   Let $(k_j)$ be a special base sequence. 
   Let $M=(M_j)$ be a weight sequence such that $m_{k_{j+1}}/m_{k_j}$ is bounded.
   Then: 
    \begin{enumerate}
        \item[(a)] $\cB^{\{M\}}_{L^\infty,(k_j)}(I)= \cB^{\{M\}}_{L^\infty}(I)$ provided that $j/m_j$ is bounded. 
        \item[(b)] $\cB^{(M)}_{L^\infty,(k_j)}(I)= \cB^{(M)}_{L^\infty}(I)$ provided that $j/m_j \to 0$.
    \end{enumerate}
\end{theorem}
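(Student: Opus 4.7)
The plan is to mimic the argument of \Cref{prop:key}, replacing the combination of \Cref{lem:minterpol} and \Cref{cor:aminterpol} by the Cartan--Gorny inequality (\Cref{prop:CartanGorny}), which is exactly designed to yield global $L^\infty$ bounds on a compact interval. Given $\ell$ with $k_j\le \ell< k_{j+1}$, I would apply \Cref{prop:CartanGorny} to $g:=f^{(k_j)}$ with total order $m:=k_{j+1}-k_j$ and intermediate order $\ell-k_j$. Using $(\tfrac{m}{\ell-k_j})^{\ell-k_j}\le e^{m}$, Stirling's bound $m!\le (Cm)^m$ with $C=C(|I|)$, and the subadditivity $\max\{a,b\}^\alpha\le a^\alpha+b^\alpha$ for $\alpha\in[0,1]$, this yields an inequality of exactly the shape of \Cref{cor:aminterpol}:
\[
\|f^{(\ell)}\|_{L^\infty(I)} \le C^{m}\Bigl(\|f^{(k_j)}\|_{L^\infty(I)}^{1-(\ell-k_j)/m}\,\|f^{(k_{j+1})}\|_{L^\infty(I)}^{(\ell-k_j)/m}+m^{\ell-k_j}\,\|f^{(k_j)}\|_{L^\infty(I)}\Bigr),
\]
with $C$ depending only on $|I|$.

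From here, the proof becomes entirely parallel to that of \Cref{prop:key}. Inserting the lacunary hypothesis $\|f^{(k_j)}\|_{L^\infty(I)}\le C(\rh m_{k_j})^{k_j}$ on both factors of the first term and the single factor of the second, one exploits in turn: boundedness of $k_{j+1}/k_j$ (the special base sequence assumption) to absorb the prefactor $C^m=C^{k_{j+1}-k_j}$ into $C^{k_j}\le C^\ell$; boundedness of $m_{k_{j+1}}/m_{k_j}$ to replace $m_{k_{j+1}}$ by a constant multiple of $m_{k_j}$; and boundedness of $k_j/m_{k_j}$ (which follows from $j/m_j$ bounded) to dominate $m=k_{j+1}-k_j\lesssim k_j$ by a constant times $m_{k_j}$, thereby absorbing the combinatorial factor $m^{\ell-k_j}$ into $(C' m_{k_j})^{\ell-k_j}$. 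Together with the monotonicity $m_{k_j}\le m_\ell$, this produces the desired estimate $\|f^{(\ell)}\|_{L^\infty(I)}\le C'(\rh' m_\ell)^\ell$, settling the Roumieu case (a).

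For the Beurling case (b), the stronger assumption $k_j/m_{k_j}\to 0$ ensures that for every $\tau>0$ one has $m=k_{j+1}-k_j\le \tau m_{k_j}$ for $j$ sufficiently large. Repeating the estimate in the second half of the proof of \Cref{prop:key} (setting $\tau:=\rh$ so that the combinatorial factor is absorbed into $\rh m_{k_j}$ for $j\ge j_\rh$, and handling the finite initial range $\ell<k_{j_\rh}$ by enlarging the multiplicative constant), one obtains $\rh'=O(\rh)$ as $\rh\to 0$. Since $\rh$ is arbitrary, this is exactly the Beurling conclusion.

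The main obstacle is the second term $m^{\ell-k_j}\|f^{(k_j)}\|_{L^\infty(I)}$ coming from the $\frac{m!}{|I|^m}\|g\|$ branch of the maximum in Cartan--Gorny; this is the term responsible for the stronger hypothesis on $M$ imposed by Albano--Mughetti. The combination of $k_{j+1}/k_j$ bounded and $k_j/m_{k_j}$ bounded (resp.\ tending to zero) is just what is needed to control this term by $(C\rh m_\ell)^\ell$, yielding the sharper statement above.
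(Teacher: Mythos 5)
Your proposal is correct and takes essentially the same route as the paper's proof: the paper likewise applies \Cref{prop:CartanGorny} to $f^{(k_j)}$ with total order $k_{j+1}-k_j$, bounds $(m/j)^j$ by $e^m$ and $(m!)^{j/m}$ by $m^j$, and then absorbs the prefactor $C^{k_{j+1}-k_j}$ and the combinatorial term $m^{\ell-k_j}$ exactly as you do, using the special-base-sequence hypothesis, boundedness of $m_{k_{j+1}}/m_{k_j}$, and $j/m_j$ bounded. The Beurling case is also handled in the paper precisely as you describe, by repeating the $k_j/m_{k_j}\to 0$ argument from the proof of \Cref{prop:key} and enlarging the constant on the finite initial range.
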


\begin{proof}
    Assume that $f \in \cB^{\{M\}}_{L^\infty,(k_j)}(I)$.
    Let $\ell \ge 0$ and let $j\ge 0$ be such that $k_j \le \ell < k_{j+1}$. 
    By \Cref{prop:CartanGorny} and the properties of $(k_j)$ and $(m_j)$,
    \begin{align*}
        &\|f^{(\ell)}\|_{L^\infty(I)} \le 4 e^{2(\ell-k_j)} e^{k_{j+1}-k_j}  \, \|f^{(k_j)}\|_{L^\infty(I)}^{1- \frac{\ell-k_j}{k_{j+1}-k_j}} 
        \\
                    &\hspace{2cm} \cdot \max\Big\{\|f^{(k_{j+1})}\|_{L^\infty(I)}^{\frac{\ell-k_j}{k_{j+1}-k_j}}, 
                                     \Big(\frac{k_{j+1}-k_j}{|I|}\Big)^{\ell-k_j} \|f^{(k_j)}\|_{L^\infty(I)}^{\frac{\ell-k_j}{k_{j+1}-k_j}}\Big\}                                     \\
                                     \\
                                     &\le 4C\,  C_1^\ell  \,  (\rh m_{k_j})^{k_j\big(1- \frac{\ell-k_j}{k_{j+1}-k_j}\big)} 
                                     \Big( (\rh m_{k_{j+1}})^{\frac{k_{j+1}(\ell-k_j)}{k_{j+1}-k_j}} +
                                     \Big(\frac{C_2 k_j}{|I|}\Big)^{\ell-k_j}  (\rh m_{k_j})^{\frac{k_j(\ell-k_j)}{k_{j+1}-k_j}}  \Big)
                                     \\
                                     &\le 4C\,  C_1^\ell  \,  
                                     \Big( (C_3\rh m_{k_{j}})^{\ell} +
                                     \Big(\frac{C_4 m_{k_j}}{|I|}\Big)^{\ell-k_j}  (\rh m_{k_j})^{k_j}  \Big)
                                     \\
                                     &\le 8C\,  (C_5\max\{\rh,C_4 |I|^{-1}\})^\ell  \,  
                                     m_\ell^\ell.
    \end{align*}
    Hence $f \in \cB^{\{M\}}_{L^\infty}(I)$.

    Assume that $f \in \cB^{(M)}_{L^\infty,(k_j)}(I)$.
    Let $\si>0$ be given.
    If we assume that $j/m_j \to 0$, then we may 
    proceed as in the proof of \Cref{prop:key} 
    to see that choosing $\rh>0$ sufficiently small 
    we may find 
    \[
        \|f^{(\ell)}\|_{L^\infty(I)} \le D(\si m_\ell)^\ell, \quad \ell \ge 0,
    \]
    where $D=D(\si)$.
\end{proof}

It is now an easy exercise to deduce the following extensions.

\begin{theorem} 
    Let $(k_j)$ be a special base sequence.
    Let $\fM$ have [moderate growth]
    and assume that, for all $M \in \fM$, $j/m_j$ is bounded, in the Roumieu case, and tends to zero, in the Beurling case.
    Then $\cB^{[\fM]}_{L^\infty,(k_j)}(I)= \cB^{[\fM]}_{L^\infty}(I)$.
\end{theorem}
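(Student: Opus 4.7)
The plan is to upgrade \Cref{thm:CG} from a single weight sequence to the family $\fM$ by running the Cartan--Gorny interpolation of \Cref{thm:CG} with two sequences from $\fM$ simultaneously: an input sequence supplied by the lacunary hypothesis and an output sequence supplied by \Cref{lem:mg}. A direct appeal to \Cref{thm:CG} is not available, because \Cref{lem:mg} only controls the mixed ratio $m_{k_{j+1}}/m'_{k_j}$ (or $m'_{k_{j+1}}/m_{k_j}$) and not $m_{k_{j+1}}/m_{k_j}$ for a single sequence. The trivial inclusion $\cB^{[\fM]}_{L^\infty}(I) \subseteq \cB^{[\fM]}_{L^\infty,(k_j)}(I)$ needs no comment.

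In the Roumieu case, let $f \in \cB^{\{\fM\}}_{L^\infty,(k_j)}(I)$ and pick a witness $M \in \fM$ together with constants $C, \rh>0$ such that $\|f^{(k_j)}\|_{L^\infty(I)} \le C(\rh m_{k_j})^{k_j}$ for every $j$. By [moderate growth] and \Cref{lem:mg} there is $M' \in \fM$ with $m_{k_{j+1}}/m'_{k_j}$ bounded; by total ordering of $\fM$, and replacing $M'$ by $\max\{M, M'\} \in \fM$ if necessary, we may further assume $m_j \le m'_j$ for all $j$. Then the chain of inequalities in the proof of \Cref{thm:CG}(a) carries over with only cosmetic changes: wherever $m_{k_j}$ appears on the right we use $m_{k_j} \le m'_{k_j}$, wherever $m_{k_{j+1}}$ appears we use $m_{k_{j+1}} \le C_0\, m'_{k_j}$, and in the alternative Cartan--Gorny term the factor $k_{j+1}$ is controlled via $k_{j+1} \le a k_j \le a C_1 m_{k_j} \le a C_1 m'_{k_j}$ (from the special base condition together with $j/m_j$ bounded). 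Collecting exponents gives $\|f^{(\ell)}\|_{L^\infty(I)} \le C'(\rh' m'_\ell)^\ell$ for all $\ell$, placing $f$ in $\cB^{\{\fM\}}_{L^\infty}(I)$.

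In the Beurling case, fix an output pair $M'' \in \fM$ and $\si>0$. By the Beurling version of \Cref{lem:mg} there is $M \in \fM$ with $m''_{k_{j+1}}/m_{k_j}$ bounded; replacing $M$ by $\min\{M, M''\} \in \fM$ (which preserves the ratio condition and is available by total ordering) we may further arrange $m_j \le m''_j$. Since $f \in \cB^{(\fM)}_{L^\infty,(k_j)}(I)$, for every $\rh>0$ we have $\|f^{(k_j)}\|_{L^\infty(I)} \le C_\rh (\rh m_{k_j})^{k_j}$ with $C_\rh=C(\rh)$. Running the same Cartan--Gorny computation as above with output sequence $m''$, and exploiting $j/m_j \to 0$ exactly as in the endgame of the proofs of \Cref{thm:CG}(b) and \Cref{prop:key}, we choose $\rh$ sufficiently small in terms of $\si$ so that every constant is absorbed, reaching $\|f^{(\ell)}\|_{L^\infty(I)} \le D(\si m''_\ell)^\ell$ with $D=D(\si)$.

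The single delicate moment is the order of quantifiers in the Beurling case: the pair $(M'', \si)$ must be fixed first, and only afterwards $M$ (via \Cref{lem:mg} and the $\min$-trick) and $\rh$ (via the $j/m_j \to 0$ absorption) are chosen, in that order. Once this bookkeeping is arranged, the interpolation computation itself is literally the one from \Cref{thm:CG}, with every single-sequence inequality replaced by its mixed-sequence analog supplied by \Cref{lem:mg} together with the total ordering of $\fM$.
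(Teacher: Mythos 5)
Your argument is correct and is precisely the ``easy exercise'' the paper intends: rerun the two\nobreakdash-sequence version of the Cartan--Gorny computation from \Cref{thm:CG}, with the input/output pair supplied by \Cref{lem:mg} and the total ordering of $\fM$, and with the quantifier bookkeeping exactly as in the proof of \Cref{thm:fM}. The only blemish is that you quote the Beurling version of \Cref{lem:mg} with the ratio transposed (it yields $m_{k_{j+1}}/m''_{k_j}$ bounded for the produced input $M$ and the given output $M''$, cf.\ \eqref{eq:condB}, not $m''_{k_{j+1}}/m_{k_j}$), but this is harmless since the correct form is exactly what your computation uses, and your min-trick plus monotonicity bridges the two in any case.
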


\begin{theorem} 
    Let $(k_j)$ be a special base sequence.
    Let $\om$ be a weight function such that  $\om(t)=O(t)$ as $t\to\infty$, 
    in the Roumieu case, and $\om(t)=o(t)$, in the Beurling case.
    Then $\cB^{[\om]}_{L^\infty,(k_j)}(I)= \cB^{[\om]}_{L^\infty}(I)$.
\end{theorem}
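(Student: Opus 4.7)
The plan is to reduce the statement to the analogous result for the family $\fM$ just proved in the preceding theorem, via the dictionary furnished by \Cref{thm:BMT}.

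First I would set $\fM := \{M^{(\rh)}\}_{\rh>0}$ with $M^{(\rh)}_k := \exp(\tfrac1\rh \vh^*(\rh k))$, as in \Cref{thm:BMT}. By that theorem, applied in the $\cB^{[\om]}_{L^\infty}(\R)$ and $\cB^{[\om]}_{L^\infty,(k_j)}(\R)$ versions (restricted to a compact interval $I$, the same argument yielding the corresponding equalities on $I$), we have
\[
    \cB^{[\om]}_{L^\infty}(I) = \cB^{[\fM]}_{L^\infty}(I)
    \quad\text{and}\quad
    \cB^{[\om]}_{L^\infty,(k_j)}(I) = \cB^{[\fM]}_{L^\infty,(k_j)}(I).
\]
It therefore suffices to establish $\cB^{[\fM]}_{L^\infty,(k_j)}(I) = \cB^{[\fM]}_{L^\infty}(I)$, which is precisely the theorem preceding the present one, provided its hypotheses are verified.

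Next I would check the two hypotheses needed. The family $\fM$ has [moderate growth]: in the Roumieu case, given $M^{(\rh)} \in \fM$, the choice $M' := M^{(2\rh)}$ together with \eqref{eq:ommg} yields the required bound with $C=1$; the Beurling case follows analogously by taking $M := M^{(2\rh)}$ for given $M^{(\rh)}$. The growth hypothesis on $\om$ translates into the required condition on $j/m^{(\rh)}_j$ via the bullet points recorded after \Cref{thm:om} (i.e., \cite[Lemma 5.7 and Corollary 5.15]{RainerSchindl12}): in the Roumieu case, $\om(t)=O(t)$ gives some $\rh_0$ with $j/m^{(\rh_0)}_j$ bounded, and since the family is totally ordered and the Roumieu class is a union over $\rh$, this is exactly what is needed to invoke part of the preceding theorem; in the Beurling case, $\om(t)=o(t)$ gives $j/m^{(\rh)}_j \to 0$ for every $\rh>0$, which matches the hypothesis for all $M \in \fM$.

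With these inputs in place, the preceding theorem applies to $\fM$ and the special base sequence $(k_j)$, yielding $\cB^{[\fM]}_{L^\infty,(k_j)}(I) = \cB^{[\fM]}_{L^\infty}(I)$, and hence the claim follows by transporting back through \Cref{thm:BMT}. The only potential pitfall is making sure the Roumieu/Beurling quantifier ordering of the growth condition on $\om$ correctly matches what the preceding $\fM$-theorem requires; but since the relevant chain already works for \Cref{thm:om}, the same reasoning transfers verbatim to the one-dimensional $L^\infty(I)$ setting here.
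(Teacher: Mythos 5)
Your proposal is correct and follows essentially the same route the paper intends: the result is stated there as an ``easy exercise'' to be deduced from the preceding $\fM$-theorem via \Cref{thm:BMT} and \eqref{eq:ommg}, exactly as \Cref{thm:om} is deduced from \Cref{thm:fM}. You also correctly handle the only delicate point, namely that in the Roumieu case $\om(t)=O(t)$ yields $j/m^{(\rh)}_j$ bounded only for $\rh$ above some $\rh_0$, which suffices because $M^{(\rh)}$ is increasing in $\rh$ and the Roumieu class is a union over $\rh$.
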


\begin{remark} \label[r]{rem:Albano}
    In \cite{Albano:2023aa}, 
    instead of assuming $m_{k_{j+1}}/m_{k_j}$ bounded, the authors require that 
    \begin{equation}\label{eq:Albano}
        \E i_0\ge 0 \A i,j >i_0 \text{ and } i<j < ik : m_j \le m_i m_k.
    \end{equation}
    If there is $\ep>0$ such that $m_j>\ep$ for all $j$, then it is not hard to check that \eqref{eq:Albano} implies
    \begin{equation}\label{eq:Albano2}
        \E C\ge 1 \A i,j \ge 1 \text{ and } i<j < ik : m_j \le C\, m_i m_k.
    \end{equation}
    But \eqref{eq:Albano2} (for $k=3$ and $j=2i$) implies that $M$ has moderate growth (cf.\ \cite[Theorem 1]{Matsumoto84}). 
    And, if $(k_j)$ is a special base sequence, then \Cref{lem:mg} implies that $m_{k_{j+1}}/m_{k_j}$ 
    is bounded.
\end{remark}

\begin{remark} \label[r]{rem:special}
    In \Cref{thm:CG}, the assumption that $(k_j)$ is a \emph{special} base sequence 
    cannot be omitted.
    In fact, by \cite[Theorem 1.6]{Albano:2023aa},
    for each positive sequence $M=(M_j)$ such that $\limsup_{j\to \infty} m_j = \infty$
    there is a base sequence $(k_j)$ and $f \in \cB^{\{M\}}_{L^\infty,(k_j)}(I)$ but 
    $f \not\in \cB^{\{M\}}_{L^\infty}(I)$.
    More precisely, a weight sequence $N=(N_j)$ and sequences of positive integers $(k_j)$ and $(\ell_j)$ 
    with the following properties are constructed:
    \begin{itemize}
        \item $\cdots < \ell_j < k_j < \ell_{j+1} < k_{j+1} < \cdots $,
        \item $N_{k_j} = M_{k_j}$, and
        \item $N_{\ell_j} = 2^{2^{\ell_j}} M_{\ell_j}$.
    \end{itemize}
    It is well-known that for each weight sequence $N$ there exists $f \in \cB^{\{N\}}_{L^\infty}(\R)$ with $|f^{(j)}(0)| \ge N_j$ for all $j \ge0$ 
    (cf.\ \cite[Theorem 1]{Thilliez08} or \cite[Lemma 2.4]{Rainer:2021aa})
    from which the assertion follows easily.

    Note that this result does not contradict \Cref{thm:DC}: if also $M=(M_j)$ is a weight sequence,
    then the above conditions imply
    \[
        m_{k_{j+1}}= n_{k_{j+1}}  \ge n_{\ell_{j+1}} = 2^{2^{\ell_{j+1}}/\ell_{j+1}} m_{\ell_{j+1}} \ge 2^{2^{\ell_{j+1}}/\ell_{j+1}} m_{k_j}
    \]
    so that $m_{k_{j+1}}/m_{k_j}$ is unbounded.

    It is possible to adapt the proof of this result 
    to the Beurling case as well as to the general setting of a weight structure $\fM$; 
    see \Cref{sec:appendix}.
\end{remark}

\subsection*{Optimality of the conditions}

Let us summarize the situation in the case of one weight sequence $M=(M_j)$ satisfying that $j/m_j$ is bounded.
Let $I \subseteq \R$ be a compact interval. By \Cref{thm:CG},
\begin{equation} \label{eq:opt}
    \cB^{\{M\}}_{L^\infty,(k_j)}(I)= \cB^{\{M\}}_{L^\infty}(I)
\end{equation}
provided that $k_{j+1}/k_j$ and $m_{k_{j+1}}/m_{k_j}$ are bounded.

As seen in \Cref{rem:Liess} and \Cref{lem:mg}, the dependencies can be summarized by 
the following diagram:
\[
    \xymatrix{
        *+[F-:<3pt>]{\txt{ $\dfrac{k_{j+1}}{k_j}$ bounded }}  \ar@/^2pc/[rrr]^{ M \text{ has moderate growth}}  
        & & &
        *+[F-:<3pt>]{\txt{ $\dfrac{m_{k_{j+1}}}{m_{k_j}}$ bounded }}  \ar@/^2pc/[lll]^{\exists n \in \N_{\ge 2} :~ \liminf_{j\to \infty} \frac{m_{nj}}{m_j}>1}
    }
\]
For instance, for all Gevrey sequences $M=(k^{sk})_k$, where $s \ge 1$,  
$k_{j+1}/k_j$ is bounded if and only if $m_{k_{j+1}}/m_{k_j} = (k_{j+1}/k_j)^s$ is bounded.

Theorem 1.4 in \cite{Albano:2023aa} states that \eqref{eq:opt} holds if $k_{j+1}/k_j$ is bounded and $M$ satisfies \eqref{eq:Albano}. 
In view of \Cref{rem:Albano},
\eqref{eq:Albano} entails moderate growth so that the conditions of \cite[Theorem 1.4]{Albano:2023aa} imply those of \Cref{thm:CG}.

Regarding \Cref{rem:special}, one may ask if $(\ell_j)$ and $(N_j)$ with analogous properties can be found for \emph{each} 
given sequence $(k_j)$ such that $k_{j+1}/k_j$ is unbounded. 
That would imply that boundedness of $k_{j+1}/k_j$ 
is necessary for the validity of \eqref{eq:opt}. 
The construction in \cite{Albano:2023aa} alluded to in \Cref{rem:special} and generalized in \Cref{sec:appendix} 
does not provide this.
On the other hand, in the situation of \Cref{rem:Liess}, in particular, in the Gevrey case, the validity of 
$\cE^{\{M\}}_{L^2,(k_j)}(U)= \cE^{\{M\}}_{L^2}(U)$ implies that $m_{k_{j+1}}/m_{k_j}$ and $k_{j+1}/k_j$ are bounded.

\begin{remark}
    Liess (see \Cref{rem:Liess}) works with $L^2$ based bounds of Roumieu type, but his arguments apply to $L^p$ based bounds for all $1 \le p \le \infty$ 
    of Roumieu and Beurling type
    if the weight sequence $M=(M_j)$ satisfies the properties listed in \Cref{rem:Liess}.  
    (The inclusion $\cE^{[M]}_{L^p,(k_j)}(U) \subseteq \cE^{[M]}_{L^p}(U)$ is continuous, by the closed graph theorem, since 
    convergence in the left-hand side entails pointwise convergence, by the Sobolev inequality.
    In the Beurling case, the quantifiers for the constants in Liess's proof change but this does not affect the conclusion.
    The arguments also work for global classes of type $\cB^{[M]}_{L^p}(I)$, where $I$ is a compact interval.)
\end{remark}

\appendix

\section{} \label{sec:appendix}

Let $\fM = \{M^{(s)} : s>0\}$ be a totally ordered family of weight sequences such that $\lim_{j \to \infty} (M^{(s)}_j)^{1/j} = \infty$ for all $s>0$.
We claim that there exists a weight sequence $N=(N_j)$ and sequences of positive integers $(k_j)$ and $(\ell_j)$ 
    with the following properties:
    \begin{enumerate}
        \item $\cdots < \ell_j < k_j < \ell_{j+1} < k_{j+1} < \cdots $,
        \item $N_{k_j} = j^{-k_j} M^{(1/j)}_{k_j}$ for all $j\ge 1$, and
        \item $N_{\ell_j} = 2^{2^{\ell_j}} M^{(j)}_{\ell_j}$ for all $j\ge 1$.
    \end{enumerate}
There exists $f \in \cB^{\{N\}}_{L^\infty}(\R)$ with $|f^{(j)}(0)| \ge N_j$ for all $j \ge0$
    (cf.\ \cite[Theorem 1]{Thilliez08} or \cite[Lemma 2.4]{Rainer:2021aa}).
Then (2) guarantees that $f \in \cB^{(\fM)}_{L^\infty,(k_j)}(\R)$.
On the other hand, 
$f \not\in \cB^{\{\fM\}}_{L^\infty}(\R)$, by (3). This shows that 
\begin{equation} \label{eq:cex}
    \cB^{(\fM)}_{L^\infty,(k_j)}(\R) \ne \cB^{(\fM)}_{L^\infty}(\R) \quad \text{ and }\quad \cB^{\{\fM\}}_{L^\infty,(k_j)}(\R) \ne \cB^{\{\fM\}}_{L^\infty}(\R).
\end{equation}
It is easy to adjust the arguments so that they give \eqref{eq:cex} for $\R$ replaced with any interval $I \subseteq \R$. 

Let us construct $N=(N_j)$, $(k_j)$, and $(\ell_j)$ with the desired properties.
Let $N_0 := 1$. We will define $N_j$ in terms of $\nu_j := N_j/N_{j-1}$ for $j \ge 1$. 
For simplicity of notation, we put $A_j := 2^{2^j}$. 

Let us first assume that $(k_j)$ and $(\ell_j)$ are arbitrary positive sequences of integers satisfying (1).
We will define $\nu_j$ such that (2) and (3) hold. In the end, we discuss how $(k_j)$ and $(\ell_j)$ must be chosen such that $(\nu_j)$ 
is increasing, i.e., $N=(N_j)$ is a weight sequence. 

For $j \ge 1$, set 
\begin{align*}
    \nu_{k_{j}} &:= \Big( \frac{M^{(1/j)}_{k_{j}}}{j^{k_{j}} A_{\ell_{j}} M^{(j)}_{\ell_{j}}} \Big)^{1/(k_{j}-\ell_{j})},
    \\
    \nu_{\ell_{j+1}} &:= \Big( \frac{j^{k_j} A_{\ell_{j+1}} M^{(j+1)}_{\ell_{j+1}}}{M^{(1/j)}_{k_j}} \Big)^{1/(\ell_{j+1}-k_j)},
    \\
    \nu_{k} &:= \nu_{k_{j}}, \quad \text{ for } \ell_{j}+1 \le k \le k_{j},
    \\
    \nu_{k} &:= \nu_{\ell_{j+1}}, \quad \text{ for } k_{j}+1 \le k \le \ell_{j+1}.
\end{align*}    
Additionally,
\begin{align*}
    \nu_1 = \cdots = \nu_{\ell_1} := (A_{\ell_1} M^{(1)}_{\ell_1})^{1/\ell_1}.
\end{align*}
By construction, (2) and (3) are satisfied.
Taking $\ell_1$ large enough we have $\nu_1\ge 1$ (since $(M^{(1)}_j)^{1/j} \to \infty$).
Next let us check that we may choose $k_1 < \ell_2 < k_3 < \cdots$ in such a way that $(\nu_j)$ is increasing.
More precisely, we have to make sure that 
\[
    \nu_{\ell_j} \le \nu_{k_j} \le \nu_{\ell_{j+1}}\quad \text{ for all } j\ge1.
\]
It is easy to see that taking $k_1 > \ell_1$ sufficiently large gives $\nu_{\ell_1} \le \nu_{k_1}$ (since $(M^{(1)}_j)^{1/j} \to \infty$).
Next $\nu_{k_j} \le \nu_{\ell_{j+1}}$ for $j \ge 1$ amounts to
\begin{align} \label{eq:mon}
    \Big( \frac{M^{(1/j)}_{k_{j}}}{j^{k_{j}} A_{\ell_{j}} M^{(j)}_{\ell_{j}}} \Big)^{1/(k_{j}-\ell_{j})} 
    \le \Big( \frac{j^{k_j} A_{\ell_{j+1}} M^{(j+1)}_{\ell_{j+1}}}{M^{(1/j)}_{k_j}} \Big)^{1/(\ell_{j+1}-k_j)}.
\end{align}
If $\ell_j < k_j$ are already chosen, then this can be achieved by taking $\ell_{j+1}>k_j$ sufficiently large since $(M^{(j+1)}_k)^{1/k} \to \infty$ as $k \to \infty$:
if the left-hand side $L$ of \eqref{eq:mon} satisfies $L\le 1$, then it is clear; 
otherwise $L^{\ell_{j+1}-k_j} \le L^{\ell_{j+1}}$ and $$L \le \Big( \frac{j^{k_j} A_{\ell_{j+1}} M^{(j+1)}_{\ell_{j+1}}}{M^{(1/j)}_{k_j}} \Big)^{1/\ell_{j+1}}$$ 
visibly can be achieved.
Finally, $\nu_{\ell_j} \le \nu_{k_j}$ for $j \ge 2$ means
\begin{align} \label{eq:mon2}
    \Big( \frac{(j-1)^{k_{j-1}} A_{\ell_{j}} M^{(j)}_{\ell_{j}}}{M^{(1/(j-1))}_{k_{j-1}}} \Big)^{1/(\ell_{j}-k_{j-1})}  \le  \Big( \frac{M^{(1/j)}_{k_{j}}}{j^{k_{j}} A_{\ell_{j}} M^{(j)}_{\ell_{j}}} \Big)^{1/(k_{j}-\ell_{j})}.
\end{align}
Here we assume that $k_{j-1}< \ell_j$ are already chosen and we choose $k_j >\ell_j$ sufficiently large such that \eqref{eq:mon2} holds. 
Similarly as for \eqref{eq:mon}, we see that this is possible because $(M^{(1/j)}_k)^{1/k} \to \infty$ as $k \to \infty$.
This ends the construction of $N=(N_j)$, $(k_j)$, and $(\ell_j)$.

\def\cprime{$'$}
\providecommand{\bysame}{\leavevmode\hbox to3em{\hrulefill}\thinspace}
\providecommand{\MR}{\relax\ifhmode\unskip\space\fi MR }
\providecommand{\MRhref}[2]{%
  \href{http://www.ams.org/mathscinet-getitem?mr=#1}{#2}
}
\providecommand{\href}[2]{#2}

\end{document}